\newtheorem*{theorem*}{Theorem}
\newtheorem*{main-theorem}{Theorem~\ref{main.thm}}
\newtheorem{theorem}{Theorem}[section]
\newtheorem{proposition}[theorem]{Proposition}
\newtheorem{lemma}[theorem]{Lemma}
\newtheorem{corollary}[theorem]{Corollary}
\theoremstyle{definition}
\newtheorem{example}[theorem]{Example}
\newtheorem{definition}[theorem]{Definition}
\theoremstyle{remark}
\newtheorem{remark}[theorem]{Remark}
\def\({{\rm (}}
\def\){{\rm )}}
\let\Mathrm\operator@font
\let\Cal\mathcal
\let\Bbb\mathbb
\newcommand{\fm}{\ensuremath{\mathfrak m}}
\def\standop#1{\mathop{\Mathrm #1}\nolimits}
\def\difstop#1#2{\expandafter\def\csname #1\endcsname{\standop{#2}}}
\def\defstop#1{\difstop{#1}{#1}}
\def\abs#1{\lvert{#1}\rvert}
\newcommand{\HK}{_\mathrm{HK}}
\def\norm#1{\lVert{#1}\rVert}
\def\RR{{\mathbb R}}
\def\ZZ{{\mathbb Z}}
\def\C{\Cal C}
\def\D{\Cal D}
\def\F{\Cal F}
\def\M{\Cal M}
\def\P{\Cal P}
\def\V{\Cal V}
\def\fm{\mathfrak{m}}
\def\fq{\mathfrak{q}}
\def\fd{\mathfrak{d}}
\def\section{\@startsection{section}{1}{\z@ }%
  {-3.5ex plus -1ex minus -.2ex}{2.3ex plus .2ex}{\bf }}
\long\def\refname{\par\kern -3ex
  \begin{center}\rm R\sc{eferences}\end{center}\par\kern 
  -2ex}
\def\@seccntformat#1{\csname the#1\endcsname.\quad}
\def\@@@sect#1#2#3#4#5#6[#7]#8{%
  \ifnum #2>\c@secnumdepth 
  \def \@svsec {}\else \refstepcounter {#1}%
  \def\@svsec{}
  \fi 
  \@tempskipa #5\relax 
  \ifdim \@tempskipa >\z@ 
  \begingroup #6\relax \@hangfrom {\hskip #3\relax 
    \@svsec}{\interlinepenalty \@M #8\par }\endgroup 
  \csname #1mark\endcsname {#7}
  \else 
  \def \@svsechd {#6\hskip #3\@svsec #8\csname #1mark\endcsname {#7}}
  \fi \@xsect {#5}}
\def\@@@startsection#1#2#3#4#5#6{%
  \if@noskipsec \leavevmode \fi \par \@tempskipa #4\relax \@afterindenttrue 
  \ifdim \@tempskipa <\z@ \@tempskipa -\@tempskipa \@afterindentfalse 
  \fi \if@nobreak \everypar {}\else \addpenalty {\@secpenalty }\addvspace 
  {\@tempskipa }\fi \@ifstar {\@ssect {#3}{#4}{#5}{#6}}{\@dblarg 
    {\@@@sect {#1}{#2}{#3}{#4}{#5}{#6}}}}
\def\theparagraph{\thesection.\arabic{paragraph}}
\def\aparagraph{\@@@startsection{paragraph}{2}{\z@ }%
  {1.75ex plus .2ex minus .15ex}{-1em}{\bf(\theparagraph) } }
\def\paragraph{\@@@startsection{paragraph}{2}{\z@ }%
  {1.75ex plus .2ex minus .15ex}{-1em}{}{\bf(\theparagraph)} }
\let\c@theorem\c@paragraph
\title{The Asymptotic Behavior of Frobenius Direct Images of Rings of Invariants}
\author{%
M{\sc itsuyasu} H{\sc ashimoto}
\and
P{\sc eter} S{\sc ymonds}
}
\date{}
\begin{document}

\maketitle
\footnote[0]
{2010 \textit{Mathematics Subject Classification}. 
  Primary 13A50, 13A35.
  Key Words and Phrases.
  Frobenius direct image, Hilbert--Kunz multiplicity, $F$-signature.
}

\begin{abstract}
  We define the Frobenius limit of a module over a ring of prime characteristic
  to be the limit of the
  normalized Frobenius direct images in a certain Grothendieck group. When a finite group acts on a polynomial ring, we calculate this limit for all the modules over the twisted group algebra that are free over the polynomial ring; we also calculate the Frobenius limit for the restriction of these to the ring of invariants. 
  As an application, we generalize the description of the generalized $F$-signature of a ring of invariants
  by the second author and Nakajima to the modular case.
\end{abstract}

\section{Introduction}
\paragraph
In commutative algebra, the study of the
asymptotic behavior of the Frobenius direct images of
a ring of prime characteristic $p$ (or a module over it) has been very fruitful.
This includes the study of
invariants such as the Hilbert--Kunz multiplicity \cite{Monsky} and
the $F$-signature \cite{HL} and its variants \cite{Sannai, HN}.

These invariants have been studied
for the ring of invariants of a finite group acting on a ring, see
\cite[(2.7), (5.4)]{WY}, \cite[Example~18]{HL}, \cite[(4.2)]{WY2}, \cite[(3.9)]{HN},
and \cite{Nakajima}.

\paragraph\label{TS.par}
Let $T=\bigoplus_{n\geq 0}T_n$ be a graded Noetherian commutative ring,
where $T_0$ is a finite direct product of Henselian local rings.
Let $S=\bigoplus_{n\geq 0}S_n$
be a finite graded $T$-algebra, which might not be commutative.

Let $\Theta^*(S)$ denote the Grothendieck group of 
the commutative monoid of finitely generated $\mathbb Q$-graded $S$-modules under direct sum,
but tensored
with $\RR$;
this means that $\Theta^*(S)$ is the $\RR$-space generated by
the finitely generated $\mathbb Q$-graded $S$-modules
subject to the relations $[M]=[M_1]+[M_2]$ whenever $M \cong M_1 \oplus M_2$.
We define $\Theta^\circ(S)$ to be the quotient of $\Theta^*(S)$ by
the relation $[M]=[M[\lambda]]$ for a finitely generated $\Bbb Q$-graded $S$-module $M$
and $\lambda\in\Bbb Q$, where $?[\lambda]$ denotes shift of degree by $\lambda$.

Because of our hypotheses on $S$,
the Krull--Schmidt property holds and so the finitely generated
indecomposable $\Bbb Q$-graded modules form a basis for $\Theta^*(S)$.
Thus $\indecomp^\circ(S)$,
the set of indecomposable $\Bbb Q$-graded modules modulo shift of degree, forms a basis for
$\Theta^\circ(S)$.
For $\alpha\in \Theta^\circ(S)$ we can write
\[
\alpha=\sum_{M\in \indecomp^\circ S}c_M[M] \qquad (c_M\in\RR)
\]
uniquely.
We define $\norm\alpha _S:=\sum_M\abs{c_M}u_S(M)$,
where $u_S(M)$ denotes $\ell_S(M/\fm_S M)$,
where $\fm_S=S_++J(S_0)$ is the graded Jacobson radical of $S$ and $\ell_S$ denotes
the length function.
It is easy to see that $(\Theta(S),\norm{\;\cdot\;})_S$ is a normed space.

\paragraph
Now let $k$ be an $F$-finite (that is, $[k:k^p]<\infty$) field of characteristic $p$,
and $R=\bigoplus_{n\geq 0}R_n$ a graded Noetherian commutative ring such that
$R_0$ is an $F$-finite Henselian local ring.
We assume that $\dim_k R_0/J(R_0)<\infty$.
Let $G$ be a finite group acting on $R$ as $k$-algebra automorphisms.
Let $S=R*G$ and $T=R^G$.
Then $T$ and $S$ are as in (\ref{TS.par}).

Let $d$ be the Krull dimension of $R$.
Set $\fd:=\log_p[k:k^p]$, and $\delta:=d+\fd$.
For any finitely generated $S$-module $M$,
we define the {\em Frobenius limit} of $M$ to be
\[
\FL(M)=\lim_{e\rightarrow \infty}\frac{1}{p^{\delta e}}[{}^eM]
\]
in $\Theta(S)$, provided that this limit exists,
where ${}^eM$ is the $e$th Frobenius direct image of $M$.
Note that $\FL(M)$ is considered to be the limit of the modules themselves,
rather than of some numerical invariant.
If the ring is commutative and $\FL(M)$ exists, the Hilbert--Kunz multiplicity and 
the (generalized) $F$-signature can be read off from it; see section~\ref{quotients.sec}.

\paragraph
Suppose that $R$ be commutative.
The group $\Theta(R)$ is larger than the Grothendieck group
$G_0(R)_\RR$, where the relations come from short exact sequences.
The latter is isomorphic to $A_*(R)_\RR$, the Chow group of $R$ (tensored with
$\RR$) through the Riemann--Roch isomorphism $\tau_R$, see \cite{Fulton}.
Let us write $\tau_R([R])=c_d+c_{d-1}+\cdots+c_0$, where $c_i$ is the
component of dimension $i$.
Then $\tau_R(\FL[R])$ is just $c_d$, which plays an important role in the
intersection theory of commutative algebra, see \cite[(2.2)]{Kurano}
and \cite{KO}.

Bruns gave a formula for $\FL(R)$ for a normal affine
semigroup ring (although
he did not define Frobenius limits, he proved a 
theorem \cite[Theorem~3.1]{Bruns} giving some more information
than $\FL(R)$, see Example~\ref{Bruns.ex}).

\paragraph
Now suppose that a finite group $G$ acts faithfully on a graded polynomial ring $B$,
so we can form the twisted group algebra $B * G$.
The generators of $B$ must be in positive degrees, but not necessarily all the same. Let $A=B^G$.

\begin{theorem*}[(\ref{thm.FLBG}), (\ref{thm.FLA})]
  Suppose that $F$ is a $\mathbb Q$-graded $B * G$-module that is free of rank $f$ over $B$.
  Then the $F$-limits of $[F]$ and $[F^G]$ exist and
\[
\FL(F)=\frac{f}{|G|} [B * G]
\]
in $\Theta^\circ (B * G)$ and 
\[
\FL(F^G)=\frac{f}{|G|} [B]
\]
in $\Theta^\circ (A)$. Analogous formulas hold after completion at the irrelevant ideal.
\end{theorem*}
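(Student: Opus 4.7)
\medskip
\noindent\emph{Proof plan.} My plan is to reduce to the complete local case, decompose $[{}^eF]$ in the Grothendieck group into a maximal projective $\hat B * G$-summand plus a remainder with no projective summand, and show that the multiplicities of the projective summands equidistribute while the remainder is asymptotically negligible. Passing to the completion $\hat B$ at the irrelevant ideal, the ring $\hat S := \hat B * G$ is Noetherian semilocal; Krull--Schmidt gives $\hat S = \bigoplus_j P_j^{d_j}$, where the $P_j$ are the distinct indecomposable projective modules and the $d_j$ are their multiplicities. Decomposing ${}^eF \cong \bigoplus_j P_j^{n_{j,e}} \oplus N_e$ with $N_e$ having no projective summand, the theorem reduces to showing (i) $n_{j,e}/p^{e\delta} \to fd_j/|G|$ for every $j$, and (ii) $\norm{[N_e]}/p^{e\delta} \to 0$.

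For (i), the multiplicity $n_{j,e}$ is detected on the semisimple top of ${}^eF$: the multiplicity of the simple $kG$-module $S_j = P_j/\fm_{\hat S}P_j$ in the top of ${}^eF/\fm_{\hat S}{}^eF$ determines $n_{j,e}$ up to endomorphism-ring factors. Now ${}^eF/\fm_{\hat B}{}^eF \cong F/\fm_{\hat B}^{[p^e]}F$, which as a $kG$-module is $f$ copies of $\hat B/\fm_{\hat B}^{[p^e]}$ since $F$ is $\hat B$-free of rank $f$. The desired asymptotic for $n_{j,e}$ then reduces to showing that, as $e \to \infty$, the class of $\hat B/\fm_{\hat B}^{[p^e]}$ in the split Grothendieck group of $kG$-modules is asymptotic to $(p^{e\delta}/|G|)\cdot[kG]$. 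In the non-modular case this follows from orthogonality of characters; in the modular case it requires a finer equidistribution argument exploiting the faithful action of $G$ on $\hat B$.

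The main obstacle is (ii). Here the essential geometric input is that on the complement of the ramification locus of $\hat B/\hat A$ (which has codimension $\ge 1$ in $\Spec \hat B$), $\hat S$ is Morita equivalent to $\hat A$ via $\hat B$ as projective generator; hence any indecomposable $\hat S$-module not isomorphic to one of the $P_j$ has support of Krull dimension $\le d-1$. A Hilbert--Kunz-type estimate on modules with lower-dimensional support then gives $\norm{[N_e]}_{\hat S} = O(p^{e(\delta-1)}) = o(p^{e\delta})$. Finally, the formula $\FL(F^G) = \frac{f}{|G|}[B]$ follows by applying the additive, norm-continuous functor $(-)^G = \Hom_{B * G}(B, -)$ to the limit, noting $(B * G)^G \cong B$ as $A$-modules (via $b \mapsto (\sum_g g) b$) and that $G$-invariants commute with Frobenius direct images.
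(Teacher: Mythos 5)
Your step (ii) rests on a false claim. In the modular case an indecomposable non-projective $\hat B*G$-module can have full support: take $V$ an indecomposable $kG$-module that is not projective over $kG$ (such $V$ exists precisely when $p\mid\abs G$); then $\hat B\otimes_k V$ is indecomposable, is $\hat B$-free (so has support all of $\Spec\hat B$), but is not a projective $\hat B*G$-module (Lemma~\ref{Frobenius.lem}, {\bf 2}). The Morita equivalence away from the ramification locus only says such a module is locally well-behaved there; it does not force global projectivity, so your dimension bound on $N_e$ and the ensuing Hilbert--Kunz estimate collapse. The remainder in the correct decomposition must be controlled by its \emph{rank}, not its dimension of support, and making that rank $o(p^{\delta e})$ is exactly the hard point. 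Relatedly, in step (i) you assert that $F/\fm_{\hat B}^{[p^e]}F\cong(\hat B/\fm_{\hat B}^{[p^e]})^f$ as $kG$-modules (false for general $F$, e.g.\ $F=B\otimes_kV$), and you defer the essential equidistribution statement --- that $[\hat B/\fm_{\hat B}^{[p^e]}]$ is asymptotic to $(p^{\delta e}/\abs G)[kG]$ in the split Grothendieck group --- to ``a finer argument'' that you never supply. That statement \emph{is} the theorem in its simplest case; the paper proves it by exhibiting a graded $kG$-direct summand $E_0\cong kG$ of $B$ in bounded degrees (using faithfulness and the Galois theory of $Q(B)/Q(A)$), deducing that after $e_0$ Frobenius steps a fixed positive fraction $\abs G p^{-de_0}$ of the rank of any lattice splits off into copies of $B\otimes_k kG$, and iterating to get geometric decay of the remainder (Lemma~\ref{e_0.lem}, Proposition~\ref{crutial.prop}). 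Also note that your two tasks are entangled: the top of $N_e$ contributes to the multiplicities of the simples, so the top of ${}^eF$ alone does not determine $n_{j,e}$ without control of $N_e$.

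Your final step for $F^G$ also has a gap: you call $(-)^G:\Theta(B*G)\to\Theta(A)$ ``norm-continuous,'' but this is not known (and is the precise difficulty) when $p\mid\abs G$; there is no a priori uniform bound on $u_A(M^G)$ in terms of $u_{B*G}(M)$ over the infinitely many indecomposables $M$. The paper circumvents this by applying $(-)^G$ to its explicit decomposition and showing $u_A(F_{0,e}^G)/p^{\delta e}\to 0$ indirectly: the total limit $\lim_e u_A([{}^eF^G])/p^{\delta e}$ equals $e\HK(\fm_A,F^G)=f\cdot e\HK(\fm_A,A)=\frac f{\abs G}\dim_k B/\fm_AB$ by Watanabe--Yoshida, which already equals the contribution of the projective part $a_e[B]$, forcing the remainder to vanish in the limit (this suffices because the remainder lies in $\Theta_+(A)$). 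So while your overall architecture (projective part plus negligible remainder) matches the paper's in spirit, both of your mechanisms for proving negligibility --- the support-dimension argument over $B*G$ and the continuity of $(-)^G$ over $A$ --- fail, and the central splitting input is missing.
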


As a consequence we obtain the following theorem.

\begin{theorem*}[(\ref{main.thm})]
  Let $k=V_0,V_1,\ldots,V_n$ be the simple $kG$-modules.
  For each $i$, let
  $P_i\rightarrow V_i$ be the projective cover, and
  $M_i:=(B\otimes_k P_i)^G$.
  Suppose that $F$ is a $\mathbb Q$-graded $B * G$-module that is free of rank $f$ over $B$.
  Then the $F$-limit of $[F^G]$ exists, and
  \[
  \FL([ F^G])=\frac{f}{\abs{G}}[ B]
  =\frac{f}{\abs{G}}\sum_{i=0}^n \frac{\dim_k V_i}{\dim_k \End_{kG}(V_i)}[\hat M_i]
  \]
 in $\Theta^\circ(A)$. The analogous formula holds after completion at the irrelevant ideal. 
\end{theorem*}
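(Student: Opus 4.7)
The first equality $\FL([F^G]) = \tfrac{f}{|G|}[B]$ is immediate from the preceding Theorem~\ref{thm.FLA} applied to $F$; the content of the main theorem is thus the second equality, which is a statement about decomposing $B$ as an $A$-module.

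The idea is to transport the decomposition of the regular representation of $kG$ through the invariants functor. Since $kG$ is Artinian and the $P_i \to V_i$ are projective covers of the simple $kG$-modules, the structure theorem for Artinian rings yields
\[
kG \;\cong\; \bigoplus_{i=0}^n P_i^{\,n_i}, \qquad n_i = \frac{\dim_k V_i}{\dim_k \End_{kG}(V_i)},
\]
as left $kG$-modules. Tensoring over $k$ with $B$ gives a decomposition $B \otimes_k kG \cong \bigoplus_i (B \otimes_k P_i)^{n_i}$ of $B * G$-modules, with $G$ acting diagonally. The key step is then the identification $(B \otimes_k kG)^G \cong B$ as $A$-modules, realised by the twisted averaging map $\phi\colon b \mapsto \sum_{g \in G} gb \otimes g$: one checks directly that $\phi(b)$ is $G$-invariant (by reindexing the sum), that every invariant element has this form with $b \in B$ uniquely determined, and that $\phi$ intertwines the $A$-action (using that elements of $A$ commute with the $G$-action). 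Applying $(-)^G$ to the above decomposition and transporting through $\phi$ gives $B \cong \bigoplus_i M_i^{\,n_i}$ as $A$-modules, hence $[B] = \sum_i n_i [M_i]$ in $\Theta^\circ(A)$.

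To justify replacing $M_i$ by $\hat M_i$ in the formula, we pass to the completion at the irrelevant ideal of $A$: over the Henselian local ring $\hat A$ the Krull--Schmidt theorem applies, and each $\hat M_i$ is indecomposable because $P_i$ is, via a Morita-type identification of endomorphism rings relating $(B * G)$-modules free over $B$ with $A$-modules (for which I would appeal to results already in the paper). The ``analogous formula after completion'' is then obtained by running the same argument after base-changing to $\hat B$, $\hat A$ throughout. I expect the principal obstacle to be precisely the identification $(B \otimes_k kG)^G \cong B$: one has to fix the correct diagonal $G$-action on $B \otimes_k kG$ and the correct $A$-module structure on both sides so that the projective-cover decomposition of $kG$ is respected by taking invariants; once this is in place, the remainder is formal bookkeeping in $\Theta^\circ$.
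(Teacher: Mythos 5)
Your proposal is correct and follows essentially the same route as the paper: the first equality is Theorem~\ref{thm.FLA}, and the second comes from writing $kG\cong\bigoplus_i P_i^{\oplus u_i}$, applying the isomorphism $(B\otimes_k kG)^G\cong B$ via $b\mapsto\sum_g gb\otimes g$ (the paper's Lemma~\ref{easy.lem}), and identifying $u_i=\dim_k V_i/\dim_k\End_{kG}(V_i)$ (the paper does this by applying $\dim_k\Hom_{kG}(-,V_i)$; you cite Artinian structure theory, which is equivalent). Your concern about indecomposability of the $\hat M_i$ is superfluous here: the relation $[B]=\sum_i u_i[M_i]$ needs only the direct sum decomposition, not indecomposability (which would in any case require the extra hypothesis that $G$ acts without pseudo-reflections), and the completed formula follows simply by continuity of the completion map (Lemma~\ref{cont.lem}).
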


In particular, we have a formula for $\FL[A]$ and $\FL([\hat A])$: see Corollary~\ref{main.cor}.

Using this theorem, we generalize a result on the generalized $F$-signature
\cite[(3.9)]{HN} to the modular case (Corollary~\ref{HN-generalization.cor}).
We also get a new proof of the theorem of Broer \cite{Broer} and Yasuda \cite{Yasuda}
which says that if $G$ does not have a pseudo-reflection and $p$ divides
the order $\abs{G}$ of $G$, then $A$ is not weakly $F$-regular.

For another application of this work to invariant theory,
see \cite{Hashimotoxx}.

In section~\ref{Frobenius.sec}, we fix our notation for Frobenius direct images.
In section~\ref{quotients.sec}, we study the group $\Theta(S)$ and define the Frobenius
limits.
In section~\ref{main.sec}, we prove the main theorems and in section~\ref{applications.sec}
we derive some consequences.

Acknowledgments:
the authors are grateful to
Professor Kazuhiko Kurano for his valuable advice.

\section{Rings, modules and Frobenius direct image}\label{Frobenius.sec}

\paragraph
\label{modules.par}
Let $k$ be a field.
By a module over a ring we mean a left module, unless otherwise specified.
A graded ring means a ring graded by the semigroup of non-negative integers.
Modules will be graded by $\mathbb Q$;
since we only consider finitely generated modules, the graded pieces are only
non-zero on a discrete subgroup, which is contained in $\frac{1}{r} \mathbb Z$
for some $r \in \mathbb N$.
The morphisms are degree preserving.
Let $G$ be a finite group acting on a ring $R$.
By an $(R,G)$-module $M$, we mean an $R$-module that is also a $kG$-module in such a way that
$g(rm)=(gr)(gm)$, $g \in G, r \in R, m \in M$.
If $M$ is an $(R,G)$-module and $V$ a $G$-module, then $M\otimes_k V$ is an $(R,G)$-module by
$r(m\otimes v)=rm\otimes v$ and $g(m\otimes v)=gm\otimes gv$ for
$r\in R$, $m\in M$, $v\in V$, and $g\in G$.

\paragraph
By a virtually commutative ring we mean a ring $S$
that contains some central subalgebra
$T$
such that $S$ is finite over $T$.
The example we have in mind is when $G$ acts on a commutative ring
$R$ and $S$ is the twisted group algebra $R*G$.
That is, $R*G=\bigoplus_{g\in G}Rg$ as an $R$-module, and the product is
given by $(rg)(r'g')=(r(gr'))(gg')$.
The ring $R*G$ is finite over the ring of invariants $T=R^G$
in many cases.
For example, assume that $R$ is a commutative Noetherian
$k$-algebra and the action of $G$ is by $k$-algebra automorphisms.
If $R$ is of finite type over $k$;
$R$ is complete with residue field $k$;
the characteristic of $k$ is $p>0$ and $R$ is $F$-finite (see \ref{F-finite.par})
\cite{Fogarty}, \cite[(9.6)]{Hashimoto12};
or the order of $G$ is not
divisible by the characteristic of $k$, then $R$ and $S=R*G$ are finite over
$T=R^G$.

An $R*G$-module is an $(R,G)$-module in an obvious way, and vice versa.
We identify these two objects.

\paragraph
Note that the $(G,R)$-module $R\otimes_k kG$ as an $R*G$-module is
identified with the rank-one free module $R*G$ by the obvious
map $r\otimes g\mapsto rg$.

\paragraph
Let $k$ be of characteristic $p>0$.
For a commutative $k$-algebra
$R$, the Frobenius homomorphism $F:R \rightarrow R$ is defined by $F(a)=a^p$.
For $r\in\Bbb Z$, let ${}^rR$ be a copy of the ring $R$, except that, in the graded case, the values of the grading are divided by $p^r$ (here we briefly suspend our convention that all rings are integer graded).
For any $e\geq 0$, we regard ${}^{r+e}R$ an ${}^r R$-algebra
through the Frobenius map $F^e:{}^rR=R\rightarrow R={}^{r+e}R$.
An $R$-module $M$, viewed as an ${}^rR$-module is denoted by ${}^rM$;
$m\in M$ is denoted by ${}^rm$ when it is viewed as an element of ${}^rM$.
When $e \geq 0$, we can regard ${}^eM$ as an $R$-module by $a({}^em)=F^e(a){}^em={}^e(a^{p^e}m)$.
Then $F^e({}^ra)={}^{r+e}(a^{p^e})=({}^{r+e}a)^{p^e}$.
The $R$-module ${}^eM$ is sometimes written as $F^e_*M$, and is called the $e$th
Frobenius direct image (also called Frobenius pushforward) of $M$.
If $R$ is graded,
$M$ is $\Bbb Q$-graded, and $m$ is a homogeneous element of degree $\lambda$, then
letting ${}^rm$ of degree $\lambda/p^r$, we have that ${}^r M$ is a $\Bbb Q$-graded ${}^r R$-module.
If $e\geq 0$, ${}^eM$ is a $\Bbb Q$-graded $R$-module via $F^e:R={}^0R\rightarrow {}^eR$.

\paragraph
If $V$ is a $k$-vector space
then ${}^eV$ is considered to be a $k$-vector space through the map $F^e$ for $e\geq 0$:
more explicitly, ${}^e v+{}^e v'={}^e (v+v')$ and
$\alpha \cdot {}^e v={}^e(\alpha^{p^e}v)$ for $\alpha\in k$ and $v,v'\in V$.
When $k$ is perfect, ${}^rV$ has a meaning for $r\in\Bbb Z$, and
it has the same dimension as $V$.
Note that ${}^eA$ is again a $k$-algebra, and
$F^{e}:{}^{e'}A\rightarrow {}^{e'+e}A$ is a $k$-algebra map for $e,e'\geq 0$.

\paragraph
In the notation above, ${}^0R$, ${}^0M$, ${}^0m$, and so on, are sometimes written as
$R$, $M$, $m$, and so on.

\paragraph
\label{frob.para}
Slightly more generally, for a commutative $k$-algebra $R$ and a finite group $G$
acting on $R$,
we define the Frobenius map $F=F_S$ of $S=R*G$ by
$F_S(\sum_{g\in G} r_g g) = \sum_g r_g^p g$.
If $G$ is trivial, then $R=S$, and $F_S$ is the usual Frobenius map.
Thus for an $R*G$-module $M$, ${}^eM$ is again an $R*G$-module.

\paragraph
Applying this to the group ring $kG$ (the case that $R=k$), we find that
${}^eV$ is a $kG$-module by $g\cdot {}^ev={}^e(gv)$ for $g\in G$ and $v\in V$.

If $V$ is $n$-dimensional, let $v_1,\ldots,v_n$ be a basis of $V$; then
we can write $gv_j=\sum_i c_{ij}v_i$.
If $k$ is perfect, then $g\cdot
{}^e v_j={}^e (gv_j)={}^e (\sum_i c_{ij}v_i)=\sum_i c_{ij}^{p^{-e}}{}^e v_i$.
Namely, ${}^eV$, as a matrix representation,
is obtained by taking the $p^e$th root of each matrix entry.

\begin{lemma}\label{F-twist.lem}
  Let $k$ and $G$ be as above.
  \begin{enumerate}
    \item[\bf 1] Let $V$ be a finite dimensional $G$-module.
  If $V$ is defined over $\Bbb F_q$, the field with $q=p^e$ elements,
  and $\fd:=\log_p [k:k^p]<\infty$,
  then ${}^eV\cong V^{p^{\fd e}}$.
\item[\bf 2] ${}^e(kG)\cong (kG)^{p^{\fd e}}$ for any $e\geq 0$.
  \end{enumerate}
\end{lemma}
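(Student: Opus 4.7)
My plan is to prove part 1 by exhibiting a natural $kG$-module isomorphism
\[
{}^eV \;\cong\; V_0 \otimes_{\Bbb F_q} {}^e k,
\]
where $V=V_0\otimes_{\Bbb F_q} k$ with $V_0$ an $\Bbb F_q G$-module, $G$ acting through $V_0$ and $k$ through ${}^ek$; then part 2 will reduce to part 1. The essential observation is that every $c\in\Bbb F_q=\Bbb F_{p^e}$ satisfies $c^{p^e}=c$, so matrix entries of the $G$-action that lie in $\Bbb F_q$ are fixed by the $e$-th power of Frobenius, and applying the Frobenius twist does not disturb the $G$-action.

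Concretely, I would verify that $v\otimes {}^e\alpha\mapsto{}^e(v\otimes\alpha)$ is the desired isomorphism. Compatibility with the $k$-action is immediate from $\beta\cdot{}^e\gamma={}^e(\beta^{p^e}\gamma)$. Compatibility with the $G$-action is precisely where the hypothesis is used: writing $gv_j=\sum_i c_{ij}v_i$ with $c_{ij}\in\Bbb F_q$, one has $c_{ij}\cdot{}^ev_i={}^e(c_{ij}^{p^e}v_i)={}^e(c_{ij}v_i)$, which is exactly what makes the formulas on the two sides agree. Once the isomorphism is in place, the identity $[k:k^{p^e}]=[k:k^p]^e=p^{\fd e}$, proved by iterating the Frobenius ring isomorphism $k\cong k^{p^i}$, gives ${}^ek\cong k^{p^{\fd e}}$ as $k$-modules; tensoring over $\Bbb F_q$ with $V_0$ then yields
\[
{}^eV\cong V_0\otimes_{\Bbb F_q} k^{p^{\fd e}}\cong V^{p^{\fd e}}.
\]

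For part 2 I would apply part 1 to the left regular representation $V=kG$: in the natural basis $\{g\}_{g\in G}$ all entries of the $G$-action matrices are $0$ or $1$, so $V$ is defined over $\Bbb F_p\subseteq\Bbb F_q$ for any $e\ge 0$. The bookkeeping check, which is the only residual subtlety, is that the $(k,G)$-module structure on ${}^e(kG)$ obtained this way agrees with the left $kG$-module structure on ${}^e(kG)$ defined through the twisted Frobenius $F_S(\sum \alpha_g g)=\sum \alpha_g^p g$ of (\ref{frob.para}); since $F_S^e(\alpha g)=\alpha^{p^e}g$, both structures send $(\alpha g,{}^ex)$ to ${}^e(\alpha^{p^e}gx)$, so they coincide, and part 1 yields ${}^e(kG)\cong (kG)^{p^{\fd e}}$. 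I expect the main conceptual obstacle to be the $G$-equivariance verification in part 1: everything rests on the fact that the Frobenius twist is invisible on matrix entries lying in $\Bbb F_q$, and it is only after choosing a descent of $V$ to $\Bbb F_q$ that this becomes a one-line calculation.
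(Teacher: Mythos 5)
Your proof is correct and takes essentially the same route as the paper's: both write $V\cong k\otimes_{\Bbb F_q}V_0$, note that the $e$-th Frobenius twist is invisible on the $\Bbb F_q$-form $V_0$ (since $c^{p^e}=c$ on $\Bbb F_q$) while it turns the $k$-factor into ${}^ek\cong k^{p^{\fd e}}$, and deduce ${}^eV\cong V^{p^{\fd e}}$, with part 2 following because $kG$ is defined over $\Bbb F_p$. You merely spell out the details (the explicit isomorphism and the compatibility of the two module structures on ${}^e(kG)$) that the paper leaves implicit.
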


\begin{proof}
  {\bf 1}.
  We set $r:=[{}^ek:k]=p^{\fd e}$.
  Let $V_0$ be the finite dimensional $\Bbb F_q$-module such that
  $k\otimes_{\Bbb F_q}V_0\cong V$.
  Then
  \[
    {}^eV\cong {}^ek\otimes_{\Bbb F_q}{}^eV_0\cong k^r\otimes_{\Bbb F_q}V_0\cong V^r.
  \]

  {\bf 2}.
  Since $kG$ is defined over $\Bbb F_p$, the assertion follows from {\bf 1}.
\end{proof}
        
\paragraph\label{F-finite.par}
$S$ is said to be $F$-finite if ${}^1S$ is a finite $S$-module.
If so, then $F^e:{}^rS\rightarrow {}^{r+e}S$ is finite for any $r\in\Bbb Z$ and $e\geq 0$.

\section{The Grothendieck group $\Theta(S)$}\label{quotients.sec}

\paragraph
Let $\C$ be an additive category.
We define its (additive) Grothendieck group to be
\[
[\C]:=(\bigoplus_{M\in \Iso \C}\ZZ\cdot M)
/(M-M_1-M_2\mid M\cong M_1\oplus M_2),
\]
where $\Iso \C$ is the set of isomorphism classes of objects in $\C$.
The class of $M$ in the group $[\C]$ is denoted by $[M]$.
We define $[\C]_\RR:=\RR\otimes_\ZZ[\C]$.
Note that we only have relations for split exact sequences,
not all exact sequences, even if $\C$ is abelian.

\paragraph
The group $[\C]$ is universal for additive maps from $\C$ to abelian groups, i.e.\
given an abelian group $\Gamma$ and
an additive map $f:\C\rightarrow \Gamma$ (that is, $f$ is a map $\C\rightarrow \Gamma$
such that $f(M)=f(M_1)+f(M_2)$ for every $M$, $M_1$, $M_2$ such that
$M\cong M_1\oplus M_2$),
$f$ extends to a unique homomorphism of abelian groups $f_*:[\C]\rightarrow\Gamma$. Thus $[\C ]_\RR$ is universal for additive maps to $\RR$-spaces. It follows that 
an additive functor $h:\C\rightarrow\D$ yields a homomorphism $h_*:[\C]\rightarrow [\D]$ which
maps $[M]$ to $[hM]$.

\begin{example}
  Let $S$ be a $k$-algebra.
  Let $S\md$ denote the category of finitely generated $S$-modules.
  Let $J(S)$ denote the Jacobson radical of $S$ and assume that $S/J(S)$ is finite
  dimensional over $k$. Then $u_{k,S}(M):= \dim_k(M/J(S)M)$ defines an additive
  function on $S\md$,
  which extends to $[S\md]_\RR$.

If $S$ is a commutative
integral domain
and we let $Q(S)$ denote the field of fractions of $S$,
then $\rank _S(M)=\dim_{Q(S)} Q(S) \otimes_S M$ is also additive and
extends to $[S\md ]_\RR$.
\end{example}

\paragraph
An additive category $\C$ is said to have the Krull--Schmidt
property if the endomorphism ring of any object
is semiperfect.
If so, the endomorphism ring of an indecomposable object is local, and hence the
Krull--Schmidt theorem holds, see \cite[(5.1.3)]{Popescu}.
Thus $[\C]$ is a $\Bbb Z$-free module with $\indecomp\C$ as free basis,
where $\indecomp \C$ is the set of isomorphism classes of indecomposable objects of 
$\C$
and $\indecomp\C$ is an $\RR$-basis of $[\C]_\RR$.

\paragraph
Let $T=\bigoplus_{n\geq 0}T_n$ be a commutative non-negatively graded Noetherian ring
(which might not be
a $k$-algebra) such that $T_0$ is a finite direct product of Henselian local rings.
Let $S=\bigoplus_{n\geq 0}S_n$ be a graded $T$-algebra that is a finite $T$-module.
For any finite graded $S$-module $M$, $\End_{S \Gr\md} M=(\End_S M)_0$ is a finite $T_0$-algebra
and is semiperfect \cite[(3.8)]{fac}, where $S\Gr\md$ is the category of graded finite $S$-modules.
Thus the Krull--Schmidt theorem holds for the category $S\Gr\md$;
see \cite{Popescu}[(5.1.3)].
Let $\fm_S$ denote the graded Jacobson radical $S_++J(S_0)$, where $S_+=\bigoplus_{n>0}S_n$ is the
irrelevant ideal.
We denote by $\hat ?$ the $\fm_S$-adic completion, which agrees with the $\fm_T$-adic
completion, where $\fm_T$ is the graded Jacobson radical of $T$.

\paragraph
We write $\Theta^*(S):=[S \Gr\md ]_\RR$, where $S \Gr\md $ is the category of $S$-finite
$\Bbb Q$-graded modules.
It will be convenient to consider the quotient of this where we identify any two
indecomposable modules that differ only by a shift in degree, which we denote by
$\Theta^\circ (S)$ or $\Theta(S)$.
We write $\Theta^\wedge(S):=[S\md]_\RR$, where $S\md$ is the category of $S$-finite
ungraded modules.

\paragraph
There is a sequence of natural maps
$\Theta^*(S)\rightarrow \Theta^\circ (S)
\rightarrow \Theta^\wedge(S)\rightarrow \Theta^\wedge (\hat{S})$.

\paragraph
It is easy to see that
if $S$ is concentrated in degree zero, then $\Theta^\circ=\Theta^\wedge$,
and the theory of $\Theta^\wedge$
for ungraded $S$ is contained in that of $\Theta^\circ$.

\paragraph
From now on we will assume that all our rings are of the type just described.
If 
$f:S'\rightarrow S$
is a finite degree-preserving map, there is a natural restriction map
$f^*:\Theta(S) \rightarrow \Theta(S')$ and the inflation map
$f_*:\Theta(S')\rightarrow \Theta(S)$.

If $I$ is an ideal in $S$ and $q:S \rightarrow S/I$
is the quotient map then we sometimes write
$\alpha /I\alpha$ for $q_*(\alpha)$.

\paragraph
For $\alpha\in \Theta^\circ(S)$, we can write
\[
\alpha=\sum_{[M]\in \indecomp^\circ S}c_M[M]
\]
uniquely, where $\indecomp^\circ(S)$ denotes
$\indecomp(S\Gr\md)/\mathord{\sim}$, where $M \sim M'$ if $M\cong M'[\lambda]$
for some $\lambda\in\Bbb Q$ ($?[\lambda]$ denotes shift of degree).
We define $\norm{\alpha} _S:=\sum_M\abs{c_M} u_S(M)$, where $u_S(M)= \ell_S(M/\fm_S M)$.
Then $(\Theta(S),\norm{\;\cdot\;} _S)$ is a normed space.
Thus $\Theta(S)$ becomes a metric space with the distance function $d$ given by $d(\alpha,\beta):=\norm{\alpha-\beta} _S$.

\begin{lemma}\label{RS.lem}
  Let $S$ be as above.
  \begin{enumerate}
    \item[\bf 1]
      Let $J$ be any ideal of $S$ such that there exists some $n\geq 1$ such that
      $\fm_S^n\subset J\subset \fm_S$.
      Define a norm $\norm{\;\cdot\;}_S^J$ on $\Theta(S)$
      by
      $\norm{\alpha}_S^J=\sum_M\abs{c_M}\ell_S(M/JM)$,
  where $\ell_S(-)$ denotes the length of an $S$-module.
  Then $\norm{\;\cdot\;}_S^J$ is equivalent to $\norm{\;\cdot\;}_S$.
\item[\bf 2]
  Let $f:S'\rightarrow S$ be a degree-preserving ring homomorphism such that
  $\fm_{S'} S\supset \fm_S^n$ for some $n\geq 1$ and
  $\fm_{S'}^m S\subset \fm_S$ for some $m\geq 1$ \(e.g.\ $S$ is $S'$-finite\).
  Define $\norm{\;\cdot\;}_{S'}^S$ by $\norm{\alpha}_{S'}^S=\sum_M\abs{c_M}\ell_{S'}(M/\fm_{S'} M)$.
  Then $\norm{\;\cdot\;}_{S'}^S$ is equivalent to $\norm{\;\cdot\;}_S$.
\item[\bf 3]
  Let $k$ be a field, and assume that $S$ is a $k$-algebra and $\dim_k S/\fm_S <\infty$.
  Define $\norm{\alpha}_{k,S}=\sum_M\abs{c_M}\dim_k M/\fm_S M$. Then
  $\norm{\;\cdot\;}_{k,S}$ is equivalent to $\norm{\;\cdot\;}_S$.
  \end{enumerate}
\end{lemma}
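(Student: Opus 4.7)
The three equivalences compare norms of the form $\sum_M|c_M|\nu(M)$ on $\Theta(S)$ with $M$ running over $\indecomp^\circ S$; my plan uses the fact that two such weighted $\ell^1$ norms are equivalent exactly when the ratio of their weights $\nu$ is bounded above and below uniformly in $M$. In each part I therefore aim to sandwich the new invariant between positive multiples of $u_S(M)=\ell_S(M/\fm_SM)$, with constants independent of $M$.

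For \textbf{1}, the inclusion $JM\subset\fm_SM$ gives $u_S(M)\leq\ell_S(M/JM)$ immediately. For the reverse direction, I will first show $\ell_S(S/J)<\infty$: since $\fm_S^n\subset J$, this reduces to $\ell_S(S/\fm_S^n)<\infty$, which follows inductively from the filtration with quotients $\fm_S^i/\fm_S^{i+1}$, each being a finitely generated module over the semisimple Artinian ring $S/\fm_S$ (semisimplicity is inherited from the assumption that $T_0$ is a finite product of Henselian local rings). Then graded Nakayama bounds the minimal number of $S/J$-generators of $M/JM$ by $u_S(M)$, so $\ell_S(M/JM)\leq\ell_S(S/J)\cdot u_S(M)$. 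Part \textbf{3} is analogous: $\dim_kN=\ell_k(N)$, and every simple $S$-composition factor of $N=M/\fm_SM$ is a module over the finite-dimensional algebra $S/\fm_S$, of $k$-dimension between $1$ and $\dim_k(S/\fm_S)$; summing over composition factors gives $u_S(M)\leq\dim_k(M/\fm_SM)\leq\dim_k(S/\fm_S)\cdot u_S(M)$.

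For \textbf{2}, the first move is to identify $\fm_{S'}M$, as an $S'$-submodule of $M$, with $JM$ where $J=f(\fm_{S'})S$ is the left ideal of $S$ generated by $f(\fm_{S'})$; in the applications of interest $f$ lands in the centre of $S$, making the identification automatic. The hypotheses yield $\fm_S^n\subset J\subset\fm_S$, where the outer inclusion uses that $\fm_S=S_++J(S_0)$ is radical (as $S_0$ is semilocal), so $f(\fm_{S'})^m\subset\fm_S$ forces $f(\fm_{S'})\subset\fm_S$. By part \textbf{1}, $\ell_S(M/JM)$ is equivalent to $u_S(M)$; it remains only to compare $\ell_{S'}$ with $\ell_S$ on the finite-length $S$-module $N=M/JM$. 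Both count composition factors, and every simple $S$-composition factor of $N$ is annihilated by $\fm_{S'}$, hence is a finitely generated $S'/\fm_{S'}$-module of finite $S'$-length; there are only finitely many isomorphism classes of simple $S$-modules, so their $S'$-lengths furnish uniform constants $c_1,c_2$ with $c_1\ell_S(N)\leq\ell_{S'}(N)\leq c_2\ell_S(N)$. I expect the hardest step to be precisely this change-of-rings comparison together with the identification $\fm_{S'}M=JM$; the remaining arguments reduce to routine Nakayama and composition-factor bookkeeping.
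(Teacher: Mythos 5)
Your parts \textbf{1} and \textbf{3} are correct and essentially the paper's own argument: the lower bounds come from $JM\subset\fm_SM$ (resp.\ $\ell_S\leq\dim_k$), and the upper bounds from the fact that $M$ is generated by $u_S(M)$ homogeneous elements, so that $M/JM$ is a quotient of $u_S(M)$ shifted copies of $S/J$; the paper phrases this as a surjection from a graded free module of rank $u_S(M)$ and the bound $\ell_S(M/JM)\leq\ell_S(S/\fm_S^n)\,u_S(M)$, which is the same graded Nakayama argument.

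For part \textbf{2} you take a genuinely different route, and it is here that there are real caveats. The paper never needs $M/\fm_{S'}M$ to be an $S$-module: it keeps all lengths over $S'$ and chains
$u_S(M)\leq\ell_{S'}(M/\fm_SM)\leq\ell_{S'}(M/\fm_{S'}^mM)\leq\ell_{S'}(S'/\fm_{S'}^mS')\,\ell_{S'}(M/\fm_{S'}M)$ in one direction and $\ell_{S'}(M/\fm_{S'}M)\leq\ell_{S'}(M/\fm_S^nM)\leq\ell_{S'}(S/\fm_S^nS)\,u_S(M)$ in the other, using only $\fm_{S'}^mM\subset\fm_SM$ and $\fm_S^nM\subset\fm_{S'}M$ together with graded Nakayama over $S'$ and over $S$. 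Your reduction to part \textbf{1} plus a change-of-rings comparison of lengths is clean, but it imports two assumptions that the stated hypotheses do not supply: (i) that $\fm_{S'}M$ equals $JM$ for the ideal $J$ of $S$ generated by $f(\fm_{S'})$, i.e.\ that $\fm_{S'}M$ is an $S$-submodule; and (ii) that $J\subset\fm_S$. Your justification of (ii) — that $f(\fm_{S'})^m\subset\fm_S$ forces $f(\fm_{S'})\subset\fm_S$ because $\fm_S$ ``is radical'' — is false for noncommutative $S$: a nonzero square-zero matrix shows that $x^m\in J(S_0)$ need not give $x\in J(S_0)$. Both (i) and (ii) do hold when $f$ lands in the center of $S$ (the center of a semisimple ring is reduced), which is the only situation the paper uses, and you flag this restriction yourself; but as written your argument proves a weaker statement than the lemma. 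Two further points: you omit the parenthetical claim that $S'$-finiteness of $S$ implies the two containment hypotheses, which occupies the first half of the paper's proof of \textbf{2}; and your uniform composition-factor bound $c_1\ell_S(N)\leq\ell_{S'}(N)\leq c_2\ell_S(N)$ is sound (simple graded $S$-modules are the simples of $S_0/J(S_0)$ up to shift, finitely many classes, each of finite $S'$-length under the $S'$-finiteness actually in force), but it is exactly the step the paper's direct chain of inequalities is designed to avoid.
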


\begin{proof}
  {\bf 1}.
  For $M\in S\Gr\md$ we have $\ell_S(M/JM)\geq \ell_S(M/\fm_S M)$ and $\norm{\alpha}_S^J\geq \norm{\alpha}_S$ follows easily.
  There is a surjective map of graded $S$-modules
  $F\rightarrow M$, with $F$ free of rank $\ell_S(M/\fm_S M)$, which induces 
  a surjection $F/\fm_S^nF\rightarrow M/\fm_S^nM$.
  Setting $r:=\ell_S(S/\fm_S^n)$, we obtain
  $\ell_S(M/JM)\leq \ell_S(M/\fm_S^nM)\leq \ell_S(F/\fm_S^nF)=r\ell_S(M/\fm_S M)$,
  and $\norm{\alpha}_S^J\leq r\norm{\alpha_S}$ follows easily.
  It follows that $\norm{\;\cdot\;}_S^J$ is equivalent to $\norm{\;\cdot\;}_S$, as required.

  {\bf 2}.
  Let $T'$ be the center of $S'$.

  First we assume that $S$ is $S'$-finite (or equivalently, $T'$-finite) and show that the hypothesis on $f$ is satisfied.
  If $\fm_{T'}S\not\subset\fm_S$, then there exists some $a\in\fm_{T'}$ such that
  the ideal $a(S/\fm_S)$ of $S/\fm_S$ is nonzero.
  As $S/\fm_S$ has finite length, $a^n(S/\fm_S)=a^{n+1}(S/\fm_S)$ for some $n\geq 1$,
  then by the graded Nakayama's lemma $a^n(S/\fm_S)=0$.
  Since $S/\fm_S$ is semisimple, $a(S/\fm_S)$ is an idempotent ideal and so $a^n(S/\fm_S)\neq 0$, a contradiction.
  Therefore $\fm_{T'}S\subset \fm_S$.
  Note that $S/\fm_{T'} S$ is a finite $T'/\fm_{T'}$-algebra and is an Artinian algebra,
  so its radical $\fm_S/\fm_{T'} S$ is nilpotent, and $\fm_S^n\subset \fm_{T'} S$ for
  some $n\geq 1$.
  If $S$ is $T'$-finite, then $\fm_S^n\subset \fm_{T'} S\subset \fm_S$ for some $n$.
  Similarly, $\fm_{S'}^m \subset \fm_{T'} S' \subset \fm_{S'}$ for some $m$.
  So $\fm_{S'}^m S\subset \fm_S$ and $\fm_S^n\subset \fm_{S'} S$, and the hypothesis
  is satisfied.

  Now we prove the assertion.
  Let $M\in S\Gr\md$.
  Then
  \begin{multline*}
  u_S(M) = \ell_S(M/\fm_S M)\leq \ell_{S'}(M/\fm_S M)\leq \ell_{S'}(M/\fm_{S'}^m M)\\
  \leq \ell_{S'}(S'/\fm_{S'}^m S')\cdot \ell_{S'}(M/\fm_{S'} M).
  \end{multline*}
  That $\norm{\alpha}_S\leq \ell_{S'}(S'/\fm_{S'}^mS')\norm{\alpha}_{S'}^S$ follows easily.
  On the other hand,
  we have
  \[
  \ell_{S'}(M/\fm_{S'} M)\leq
  \ell_{S'}(M/\fm_S^n M)\leq
  \ell_{S'}(S/\fm_S^n S)\cdot u_S(M),
  \]
  and $\norm{\alpha}_{S'}^S \leq \ell_{S'}(S/\fm_S^n S)\norm{\alpha}_S$ follows
  easily.
  Hence $\norm{\alpha}_{S'}^S$ is equivalent to $\norm{\alpha}_S$.

  {\bf 3}.
  This is because
  \[
  \ell_S(M/\fm_S M) \leq \dim_k M/\fm_S M \leq \dim_k S/\fm_S\cdot \ell_S(M/\fm_S M).
  \]
\end{proof}

\begin{lemma} 
\label{cont.lem}
The following $\Bbb R$-linear maps are continuous:
\begin{enumerate}
\item[\bf 1]
$\Theta^*(S) \rightarrow \Theta^\circ(S)$;
\item [\bf 2]
$\Theta^\circ(S) \rightarrow \Theta^\wedge(\hat{S})$;
\item[\bf 3]
$f^*:\Theta(S) \rightarrow \Theta(S')$, for $f:S' \rightarrow S$, finite;
\item[\bf 4]
  $f_*:\Theta(S') \rightarrow \Theta(S)$
  given by $f_*(M)=S\otimes_{S'} M$, for $f:S' \rightarrow S$, finite;
\item[\bf 5]
$\ell_S:\Theta(S) \rightarrow \mathbb R$, when $\ell_S(S) < \infty$.
\item[\bf 6]
  $\rank_R :=\dim_{Q(R)}( Q(R) \otimes_R-):\Theta(R) \rightarrow \mathbb R$, where $R$ is
  a domain (graded or not) and $Q(R)$ is its (ungraded) field of fractions.
\end{enumerate}
\end{lemma}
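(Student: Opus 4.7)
The plan is to show that each of the six $\mathbb R$-linear maps $\phi$ is Lipschitz, by producing a constant $C > 0$ such that $\norm{\phi([M])} \leq C \cdot u_S(M)$ uniformly over indecomposable $M$. Once this is in hand, $\mathbb R$-linearity and the triangle inequality give $\norm{\phi(\alpha)} \leq C\norm{\alpha}_S$ for every $\alpha \in \Theta(S)$, which is exactly continuity.

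Two elementary inequalities will carry most of the argument. For any finitely generated graded $S$-module $N$, writing $\mu_S(N)$ for the minimal number of homogeneous generators, (a) $\mu_S(N) \leq u_S(N)$, since the semisimple quotient $N/\fm_S N$ decomposes into simples whose multiplicities sum to $u_S(N)$ but have maximum at most $\mu_S(N)$; and (b) $u_S(N) \leq \mu_S(N) \cdot \ell_S(S/\fm_S)$, since $N/\fm_S N$ is a quotient of $(S/\fm_S)^{\mu_S(N)}$.

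Items (1) and (2) come out $1$-Lipschitz and essentially formal. For (1), shifting preserves $u_S$, so grouping basis vectors by shift class immediately yields $\norm{\bar\alpha}_{\Theta^\circ} \leq \norm{\alpha}_{\Theta^*}$. For (2), an indecomposable $M$ is sent to $[\hat M] = \sum_i [N_i]$ via Krull--Schmidt over $\hat S$; the finite-length module $M/\fm_S M$ is already complete, so $\norm{[\hat M]}_{\hat S} = u_{\hat S}(\hat M) = \ell_{\hat S}(M/\fm_S M) = u_S(M)$. Items (3) and (4) rely on Lemma~\ref{RS.lem}(2). For (3), the Krull--Schmidt decomposition $M \cong \bigoplus_j M_j$ over $S'$ gives $\norm{f^*([M])}_{S'} = \sum_j u_{S'}(M_j) = u_{S'}(M) = \ell_{S'}(M/\fm_{S'}M)$, so $\norm{f^*(\alpha)}_{S'} \leq \norm{\alpha}_{S'}^S$, which is equivalent to $\norm{\alpha}_S$ by the lemma. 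For (4), a minimal $S'$-generating set of $M$ also generates $f_*(M) = S \otimes_{S'} M$ over $S$, so $\mu_S(f_*(M)) \leq \mu_{S'}(M) \leq u_{S'}(M)$ by (a), and (b) then yields $u_S(f_*(M)) \leq \ell_S(S/\fm_S) \cdot u_{S'}(M)$. Items (5) and (6) are one-line consequences of (a): $\ell_S(M) \leq \mu_S(M) \ell_S(S) \leq u_S(M) \ell_S(S)$, and $\rank_R(M) \leq \mu_R(M) \leq u_R(M)$.

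No single step is the main obstacle; the argument is a matter of systematically applying these two bounds together with one invocation of Lemma~\ref{RS.lem}(2). The most delicate point is probably item (2), where one must check that completion preserves $u$-values on indecomposables --- but this reduces at once to the fact that $M/\fm_S M$ has finite length, hence equals its own completion, so passing to $\hat S$ changes neither the length nor the Jacobson-radical quotient.
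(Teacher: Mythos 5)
Your proof is correct and, for part \textbf{3} (the only part the paper proves in detail), follows the same route as the paper: bound $\norm{f^*[M]}_{S'}$ by $\ell_{S'}(M/\fm_{S'}M)$ and invoke the norm equivalence of Lemma~\ref{RS.lem}, \textbf{2}. The remaining parts are left to the reader in the paper as routine, and your verifications of them via the two generator/length inequalities are sound.
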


\begin{proof}
We only prove {\bf 3} and leave the routine verifications of the others to the reader.

Let $\norm{\;\cdot\;}_{S'}^S$ be as in Lemma~\ref{RS.lem}.
By Lemma~\ref{RS.lem}, there exists some $r>0$ such that
$\norm{\;\cdot\;}_{S'}^S\leq r\cdot \norm{\;\cdot\;}_S$.
For $\alpha = \sum_M c_M [M]$ as a sum of indecomposable modules in $\Theta(S)$, we have
\begin{multline*}
  \norm{f^*\alpha}_{S'}
  =
  \norm{\sum_Mc_M [M]}_{S'}
  \leq
  \sum_M|c_M| \norm{M}_{S'}
  =
  \sum_M|c_M| \norm{M}^S_{S'} \\
  \leq
  r\cdot \sum_M |c_M|\norm{M}_S
  =
  r\cdot \norm{\alpha}_S,
\end{multline*}
and continuity follows.
\end{proof}

\paragraph
\label{par.plus}
Define $\Theta_+(S)$ to be the subset of $\Theta(S)$ consisting of the
$\alpha = \sum c_M [M]$ with all the $c_M \geq 0$.

\begin{lemma}
\label{lem.plus}
Suppose that $f: S' \rightarrow S$ is finite and let $\{\alpha_i \}_{i \in \mathbb N}$ be a sequence
of elements of $\Theta(S)$ such that each $\alpha _i$ is in $\Theta_+(S)$ or $-\Theta_+(S)$.
Then $\norm{ \alpha _i}_S \rightarrow 0$ if and only if $u_{S'}(f^* \alpha_i) \rightarrow 0$.
\end{lemma}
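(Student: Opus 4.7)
The plan is to reduce both conditions to the equivalent norm $\norm{\;\cdot\;}_{S'}^S$ introduced in Lemma~\ref{RS.lem}~{\bf 2}. Writing each $\alpha_i=\sum_M c_M^{(i)}[M]$ as a sum over indecomposable graded $S$-modules, the sign hypothesis states that for each fixed $i$ the coefficients $c_M^{(i)}$ are all $\geq 0$ or all $\leq 0$.

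First I would observe that the function $u_{S'}(-)=\ell_{S'}(-/\fm_{S'}-)$ is additive on $S'\Gr\md$, so it extends to an $\RR$-linear functional on $\Theta(S')$. Applied to $f^*\alpha_i$, it evaluates as
\[
u_{S'}(f^*\alpha_i) = \sum_M c_M^{(i)}\,\ell_{S'}(M/\fm_{S'} M).
\]
Under the sign hypothesis, all terms in this sum have the same sign, so
\[
\abs{u_{S'}(f^*\alpha_i)} \;=\; \sum_M \abs{c_M^{(i)}}\,\ell_{S'}(M/\fm_{S'} M) \;=\; \norm{\alpha_i}_{S'}^S,
\]
where $\norm{\;\cdot\;}_{S'}^S$ is the seminorm defined in Lemma~\ref{RS.lem}~{\bf 2}.

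Since $f:S'\rightarrow S$ is finite, that lemma gives that $\norm{\;\cdot\;}_{S'}^S$ is equivalent to $\norm{\;\cdot\;}_S$. Combining, $\norm{\alpha_i}_S\to 0$ iff $\norm{\alpha_i}_{S'}^S\to 0$ iff $\abs{u_{S'}(f^*\alpha_i)}\to 0$, which is the claim.

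I do not expect any serious obstacle: the only substantive step is the identity $\abs{u_{S'}(f^*\alpha_i)}=\norm{\alpha_i}_{S'}^S$, which is exactly where the uniform-sign hypothesis is needed (without it one would only get the inequality $\abs{u_{S'}(f^*\alpha_i)}\leq \norm{\alpha_i}_{S'}^S$, and the ``only if'' direction could fail through cancellation between positive and negative coefficients). The rest is a direct appeal to the equivalence of norms already established.
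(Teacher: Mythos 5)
Your proof is correct and is essentially identical to the paper's: both reduce to the equivalent norm $\norm{\;\cdot\;}_{S'}^S$ from Lemma~\ref{RS.lem}~{\bf 2} and use the uniform-sign hypothesis to obtain $\abs{u_{S'}(f^*\alpha_i)}=\norm{\alpha_i}_{S'}^S$. Your remark about where the sign hypothesis is needed matches the intended role of that assumption exactly.
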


\begin{proof}
  Note that $\norm{\alpha_i}_{S}\rightarrow 0$ if and only if
  $\norm{\alpha_i}_{S'}^S\rightarrow 0$ by
  Lemma~\ref{RS.lem}.
  As $\alpha_i\in\pm \Theta_+(S)$, we have that $\norm{\alpha_i}_{S'}^S=\abs{u_{S'}(f^*(\alpha_i))}$,
  and we are done.
\end{proof}

\paragraph
For $M,N\in S\Gr \md$, we define
\begin{multline*}
\sm_N M:=\max
  \{n\in\Bbb Z_{\geq 0}\mid
  \text{$\bigoplus_{i=1}^nN[\lambda_i]$ is  a direct summand}\\
  \text{of $M$ for some $\lambda_1,\ldots,\lambda_n\in\Bbb Q$}
  \}.
\end{multline*}
For $N\in\indecomp^\circ S$, $\sm_N:S\Gr\md\rightarrow \Bbb Z$ is an additive function,
and hence induces a linear map $\sm_N:\Theta(S)\rightarrow\RR$.
More precisely, $\sm_N$ is given by $\sm_N(\sum_M c_M[M])=c_N$,
thus $\sm_N$ is continuous.

\paragraph
Let $k$ be a field of prime characteristic $p$.
Let $R=\bigoplus_{n\geq 0}R_n$
be a commutative graded $k$-algebra such that $R_0$ is an
$F$-finite Henselian local ring.
Let $\fm_R$ be the graded maximal ideal of $R$, and assume that $R/\fm_R$ is a
finite-dimensional $k$-vector space.
Let $G$ be a finite group acting on $R$ as degree-preserving $k$-algebra automorphisms
(the case that $G$ is trivial is also important in what follows).
Let $S:=R*G$.
Note that $T$ is central in $R$ and $S$.
Note also that $R/\fm_R$ and $k$ are $F$-finite, and $R$ and $S$ are finite over $T:=R^G$ \cite[(9.6)]{Hashimoto12}.
It is easy to see that $T$ is $F$-finite and Henselian.

Let $d=\dim R$, $\fd:=\log_p[k:k^p]$, and set $\delta=d+\fd$.

\paragraph
For $\alpha=\sum_{M\in \indecomp^\circ S}c_M[M] \in \Theta(S)$, define
\[
  {}^e\alpha=\sum_{M\in \indecomp^\circ S}c_M[{}^eM],
  \]
  and call it the $e$th Frobenius direct image of $\alpha$.
  We define $\NF_e(\alpha)=\frac{1}{p^{\delta e}}{}^e\alpha$.

\begin{definition}
Let
\[
\FL(\alpha):=\lim_{e\rightarrow \infty}\frac{1}{p^{\delta e}}{}^e\alpha=\lim_{e\rightarrow\infty}\NF_e(\alpha)
\]
in $\Theta(S)$, provided the limit exists.
We call $\FL(\alpha)$ the {\em Frobenius limit} of $\alpha$.
\end{definition}

\paragraph
Assume that $R$ is a domain.
As we have $\log_p [Q(R):Q(R)^p]=\delta$ by \cite[(2.3)]{Kunz},
$\rank_R {}^eM=p^{\delta e}\rank_{{}^eR}{}^eM=p^{\delta e}\rank_R M$.
It follows that $\rank_R \NF_e(\alpha)=\rank_R \alpha$ for $\alpha\in \Theta(S)$.
If $\FL(\alpha)$ exists, then $\rank_R \FL(\alpha)=\rank_R \alpha$.

\paragraph
When $I$ is a $G$-ideal in $R$, we sometimes write $\alpha /I\alpha$ for $R/I\otimes_R \alpha$.
Note that ${}^e\alpha/I({}^e\alpha)={}^e(\alpha/I^{[p^e]}\alpha)$,
where $I^{[p^e]}$ is the ideal generated by $\{a^{p^e}\mid a\in I\}$, which is a $G$-ideal.

\paragraph
\label{HK.par}
If $\fq$ is a homogeneous $\fm_T$-primary ideal of $T$,
the Hilbert--Kunz multiplicity of $M\in T\Gr\md$ \cite{Monsky} is defined by
\[
e\HK(\fq,M):
=
\lim_{e\rightarrow\infty}\frac{\ell_T(M/\fq^{[p^e]}M)}{p^{de}}
=
\lim_{e\rightarrow\infty}\frac{\ell_T(T/\fq\otimes_T {}^eM)}{p^{\delta e}}.
\]
This is an additive function, so it induces a function on $\Theta(T)$:
\[
e\HK(\fq,\alpha)
=
\lim_{e\rightarrow\infty}\frac{\ell_T(T/\fq\otimes_T {}^e\alpha)}{p^{\delta e}}
=
\lim_{e\rightarrow\infty}\ell_T(T/\fq\otimes_T \NF_e(\alpha)).
\]

By Lemma~\ref{cont.lem},
$e\HK(\fq,\alpha)=\ell_T(T/\fq\otimes_T \FL(\alpha))$,
provided $\FL(\alpha)$ exists.

Note that if $T$ is a domain then $e\HK(\fq,M)=\rank_TM \cdot e\HK(\fq,T)$.

\paragraph
Let $N\in\indecomp^\circ S$.
We define
\[
\FS_N(\alpha):=\lim_{e\rightarrow\infty}\sm_N(\NF_e(\alpha)),
\]
provided the limit exists.
We call it the generalized $F$-signature of $M$ with respect to $N$,
see \cite{HN}.
If $\FL(\alpha)$ exists, then $\FS_N(\alpha)=\sm_N(\FL(\alpha))$, since $\sm_N$ is
continuous.

\begin{example}\label{Bruns.ex}
  In \cite{Bruns}, Bruns studied the asymptotic behavior of the Frobenius
  direct images of normal affine semigroup rings;
  we follow the notation used there.
  In \cite[Theorem~3.1]{Bruns}, assume for simplicity that 
  $M$ is positive in the sense that there is a rational
  hyperplane $H$ of $\Bbb R^d$ through the origin such that
  $H\cap M=\{0\}$.
  Let $h:\Bbb R^d\rightarrow \Bbb R$ be a defining equation of $H$
  \(that is, $h^{-1}(0)=H$\) such that $h(\Bbb Z^d)\subset \Bbb Z$ and
  $h(M)\subset \Bbb Z_{\geq 0}$.
  Then $R=\bigoplus_{n\in \Bbb Z}R_n$ is positively graded
  \(that is, $R_n=0$ for $n<0$ and $R_0=K$\), where $R_n=\bigoplus_{x\in
    h^{-1}(n)\cap M}Kx$.
  Let $\fm=\bigoplus_{n>0}R_n$.
  By \cite[Theorem~3.1]{Bruns}, we immediately have that
  \[
  \FL(R)=\sum_\gamma \vol(\gamma)[\Cal C_\gamma]
  \]
  in $\Theta^\circ(R)$.
\end{example}

\section{The Frobenius limit for a group acting on a polynomial ring}\label{main.sec}

\paragraph
\label{poly.par}
Let $k$ be a field, and let $B$ be a graded polynomial ring over $k$ with the degrees
of the generators all positive integers, but \emph{not} necessarily the same.
Let $G$ be a finite group that acts \emph{faithfully} on $B$ as a graded $k$-algebra.
We can form the twisted group algebra $B*G$ and we define the Frobenius operator on it as in
(\ref{frob.para}).

Let $A=B^G$, the ring of invariants.
Let $\fm_A$ and $\fm_B$ denote the irrelevant maximal ideals of $A$ and $B$,
respectively.
Let $\hat A$ be the $\fm_A$-adic completion of $A$ and
let $\hat B$ be the $\fm_B$-adic completion of $B$ (it is also the $\fm_A$-adic completion).

Let $\V$ be the category of $\mathbb Q$-graded $kG$-modules and
let $\M$ be the category of $\mathbb Q$-graded
$B*G$-modules.

Let $\Cal F$ denote the full subcategory of $\M$ consisting of
$F\in\M$ such that $F$ is $B$-finite  and $B$-free.
In other words, $F$ is a $\mathbb Q$-graded $B*G$-lattice.

\paragraph
Let $V=\bigoplus_\lambda V_\lambda$ be an object of $\V$.
Then $V$ is a projective object of $\V$ if and only if it is so as a $kG$-module,
since $\Hom_{\V}(V,W)=\prod_\lambda\Hom_{kG}(V_\lambda,W_\lambda)$.
We denote the category of finite dimensional projective objects of $\V$ by $\P_0$.
Then clearly $\P_0=\add\{kG[\lambda]\mid \lambda\in \mathbb Q \}$,
where $[\lambda]$ denotes shift of degree by $\lambda$.

\begin{lemma}\label{c-flat.lem}
  Let $R=\bigoplus_{i\geq 0}R_i$ be a commutative positively-graded \(that is, $R_0=k$\)
  $k$-algebra.
  Let $F$ and $F'$ be graded $R$-finite $R$-free modules, and
  $h:F\rightarrow F'$ a graded $R$-homomorphism.
  Then the following are equivalent:
  \begin{enumerate}
  \item[\bf 1] $h$ is injective, and $C:=\Coker h$ is $R$-free;
  \item[\bf 2] $1\otimes h: R/\fm\otimes_R F\rightarrow R/\fm\otimes_R F'$ is
    injective;
  \end{enumerate}
  where $\fm=\bigoplus_{i>0}R_i$ is the irrelevant ideal.
\end{lemma}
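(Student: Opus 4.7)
The plan is to prove the two implications separately, using the graded Nakayama lemma as the main tool. Both arguments are standard, so the main challenge is just presenting them cleanly; there is no serious obstacle here.

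For the direction \textbf{1} $\Rightarrow$ \textbf{2}, the key observation is that a finite graded free module over $R$ is projective, so the short exact sequence $0\to F\to F'\to C\to 0$ splits as a sequence of $R$-modules (indeed as graded $R$-modules, since we can find a homogeneous splitting). Then $F'\cong F\oplus C$ as graded $R$-modules, and tensoring with $R/\fm$ yields $F'/\fm F'\cong F/\fm F\oplus C/\fm C$, with $1\otimes h$ identified with the inclusion into the first summand; injectivity is then immediate. Alternatively one can simply observe that $\Tor_1^R(R/\fm, C)=0$ since $C$ is free, and the long exact sequence gives injectivity of $1\otimes h$.

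For the direction \textbf{2} $\Rightarrow$ \textbf{1}, I would choose a homogeneous $R$-basis $f_1,\ldots,f_n$ of $F$. Hypothesis \textbf{2} says precisely that the images $\overline{h(f_1)},\ldots,\overline{h(f_n)}$ are $k$-linearly independent in $F'/\fm F'$. Extend these to a homogeneous $k$-basis by adjoining $\bar g_1,\ldots,\bar g_m$, with lifts $g_j\in F'$ chosen to be homogeneous of the matching degrees. By the graded Nakayama lemma the combined set $h(f_1),\ldots,h(f_n),g_1,\ldots,g_m$ generates $F'$ over $R$. Since $F'$ is $R$-free of rank $n+m$ (equal to the dimension of $F'/\fm F'$), this generating set is in fact a homogeneous $R$-basis of $F'$. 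Consequently $h$ sends the basis $(f_i)$ of $F$ to part of a basis of $F'$, so $h$ is injective, and $C=F'/h(F)$ is $R$-free on the images of the $g_j$.

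The one subtlety worth being careful about is that everything should be done in the graded category: bases, generating sets, and splittings are all taken homogeneous, and the graded version of Nakayama is applied to $F'$ regarded as a finitely generated graded $R$-module with $R$ positively graded and $R_0=k$. Once that bookkeeping is in place the argument is essentially mechanical.
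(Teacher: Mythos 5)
Your proof is correct and follows essentially the same route as the paper: the Tor vanishing (or equivalently the splitting) for \textbf{1}$\Rightarrow$\textbf{2}, and for \textbf{2}$\Rightarrow$\textbf{1} the same graded Nakayama argument producing a homogeneous basis of $F'$ extending $h(f_1),\ldots,h(f_n)$ (the paper obtains the extra elements $g_j$ as lifts of a minimal generating set of $C$, which amounts to the same thing).
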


\begin{proof}
  {\bf 1$\Rightarrow$2}.
  As the sequence
  \[
  0\rightarrow F\xrightarrow h F'\rightarrow C\rightarrow 0
  \]
  is exact,
  \[
  0=\Tor^R_1(R/\fm,C)\rightarrow R/\fm\otimes_R F\xrightarrow{1\otimes h}
  R/\fm\otimes_R F'
  \]
  is exact.

  {\bf 2$\Rightarrow$1}.
  Take a homogeneous free basis $f_1,\ldots,f_r$ of $F$, and
  take homogeneous elements $f_1',\ldots,f_s'$ of $F'$ such that their
  images in $C$ form a minimal set of generators for $C$.
  As
  \[
  0\rightarrow R/\fm\otimes_R F
  \rightarrow R/\fm\otimes_R F'
  \rightarrow R/\fm\otimes_R C
  \rightarrow 0
  \]
  is exact, we have that $\rank F'=r+s$, and $h(f_1),\ldots,h(f_r),f_1',\ldots,f_s'$
  generate $F'$ by the graded version of Nakayama's lemma (this applies since the grading on the modules must be discrete).
  Thus it is easy to see that this set of elements forms a free basis for $F'$.
  In particular, $h(f_1),\ldots,h(f_r)$ are linearly independent and hence
  $h$ is injective.
  Also, $C=F'/F$ is a free module with basis $f_1',\ldots,f_s'$.
\end{proof}

\begin{lemma}\label{Frobenius.lem}
  \begin{enumerate}
  \item[\bf 1]
    $P:=\{(B\otimes kG)[\lambda]\mid \lambda\in \mathbb Q \}$
    is a set of Noetherian projective objects that generate $\M$.
    In particular,
    $\P:=\add P$ is the full subcategory of Noetherian projective objects of $\M$.
  \item[\bf 2] 
    For $M\in\M$, the following are equivalent.
    \begin{enumerate}
    \item[\bf a] $M\in \Cal P$;
    \item[\bf b] $M\cong B\otimes_k V$ as graded modules, for some $V\in\Cal P_0$;
    \item[\bf c] $M\in\F$, and $M/\fm_B M\in \Cal P_0$.
    \end{enumerate}
    If these conditions are satisfied, then $M\cong B\otimes_k M/\fm_B M$ as graded modules.
    \item[\bf 3] 
      $\Cal F$ is a Frobenius category with respect to all short exact sequences \(see {\rm \cite{Happel}} for definition\),
      and $\Cal P$ is its full subcategory of projective and injective objects.
  \end{enumerate}
\end{lemma}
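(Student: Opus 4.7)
Part 1 follows directly from the identification $B\otimes_k kG \cong B*G$ as $B*G$-modules: shifts of the free module $B*G$ are projective and generate $\M$, and they are Noetherian over the graded Noetherian ring $B*G$. Hence $\P=\add P$ is precisely the subcategory of Noetherian projective objects of $\M$.

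For Part 2, the implication (a) $\Rightarrow$ (b) follows by unpacking $\P=\add\{(B\otimes_k kG)[\lambda]:\lambda\in\mathbb Q\}$ and $\P_0=\add\{kG[\lambda]\}$, using that $B\otimes_k-$ commutes with finite direct sums and shifts. The implication (b) $\Rightarrow$ (c) is immediate, since $B\otimes_k V$ is $B$-finite, $B$-free and $(B\otimes_k V)/\fm_B(B\otimes_k V)\cong V$. The substantive step is (c) $\Rightarrow$ (b): given $M\in\F$ with $V:=M/\fm_B M\in\P_0$, projectivity of $V$ as a graded $kG$-module splits the $kG$-equivariant surjection $M\twoheadrightarrow V$, yielding a $kG$-section $\sigma:V\hookrightarrow M$. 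Extending by $B$-linearity produces a $B*G$-map $\phi:B\otimes_k V\to M$ whose reduction modulo $\fm_B$ is the identity on $V$. Graded Nakayama forces $\phi$ to be surjective, and Lemma~\ref{c-flat.lem} forces it to be injective with $B$-free cokernel; surjectivity then forces this cokernel to vanish, giving $M\cong B\otimes_k V=B\otimes_k M/\fm_B M$.

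For Part 3, I would first verify that $\F$ inherits an exact structure from $\M$: it is closed under summands (by graded Nakayama, a summand of a $B$-free module is $B$-free) and under extensions (a sequence with $B$-free quotient is $B$-split, so the middle term is $B$-free when the outer terms are). Each $P\in\P$ is projective in $\M$, hence in $\F$. For enough projectives, given $F\in\F$ I would set $V:=F/\fm_B F\in\V$, choose a projective cover $\tilde V\twoheadrightarrow V$ in $\P_0$, and repeat the lifting of (c) $\Rightarrow$ (b) to obtain $\phi:B\otimes_k\tilde V\to F$; by Nakayama $\phi$ is surjective, and its kernel is a $B$-summand of $B\otimes_k\tilde V$, hence $B$-free and in $\F$. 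For enough injectives I would use the $B$-duality $F\mapsto F^\vee:=\Hom_B(F,B)$ with contragredient $G$-action $(g\phi)(x):=g(\phi(g^{-1}x))$. On $\F$ this is exact (sequences in $\F$ are $B$-split, so $\Hom_B(-,B)$ preserves them) and biduality is the canonical isomorphism on $B$-free modules, so $(-)^\vee$ is a self-duality of $\F$; it preserves $\P$ because $\Hom_B(B\otimes_k kG,B)\cong B\otimes_k\Hom_k(kG,k)\cong B\otimes_k kG$, using the self-duality of $kG$ as a Frobenius algebra. Dualizing the enough-projectives statement applied to $F^\vee$ then gives an admissible monomorphism $F\hookrightarrow P^\vee\in\P$. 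Finally, any $\F$-projective object splits off its deflation from $\P$ and so lies in $\P$; the dual argument handles injectives.

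The main obstacle will be the duality step in Part 3: verifying that $(-)^\vee$ with the contragredient action is an exact contravariant self-equivalence of $\F$ preserving $\P$, which rests on the $B$-splitting of $\F$-sequences together with the self-duality of $kG$ as a Frobenius algebra. The rest is bookkeeping once the lifting trick via projectivity in $\P_0$ and Lemma~\ref{c-flat.lem} is in place.
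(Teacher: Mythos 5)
Your proposal is correct and follows essentially the same route as the paper: the identification of $(B\otimes_k kG)[\lambda]$ with shifts of the free module for part 1, the splitting of $M\to M/\fm_B M$ in $\V$ combined with Lemma~\ref{c-flat.lem} for part 2, and the dualizing functor $\Hom_B(-,B)$ carrying $\P$ to itself for part 3. The paper compresses the duality step into one line; your elaboration of why it is exact on $\F$ and preserves $\P$ (via $B$-splitness of sequences in $\F$ and self-duality of $kG$) is exactly the intended justification.
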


\begin{proof}
    {\bf 1} Obviously, each $(B\otimes_k kG)[\lambda]$ is a Noetherian object.
    On the other hand,
    \[
      \Hom_{\M}(B\otimes kG[\lambda],N)\cong \Hom_{\V}(kG[\lambda],N)\\
      \cong
      \Hom_{\Gr\Mod k}(k[\lambda],N)\cong N_{-\lambda},
    \]
    and each object of $P$ is a projective object, and $P$ generates $\M$,
    where $\Gr\Mod k$ denotes the category of graded $k$-vector spaces.

{\bf 2}.
{\bf a$\Leftrightarrow$b$\Rightarrow$c} is trivial.
We show the last assertion, assuming {\bf c}.
This also proves {\bf c$\Rightarrow$b}.
As $M/\fm_B M$ is projective in $\V$, the canonical map $M\rightarrow M/\fm_B M$ has
a splitting $j: M/\fm_B M\rightarrow M$ in $\V$.
Then, defining $\varphi : B\otimes_k M/\fm_B M\rightarrow M$ by $\varphi(b\otimes v)
=b j(v)$, $\varphi$ is
$B*G$-linear.
By Lemma~\ref{c-flat.lem}, it is easy to see that $\varphi$ is an isomorphism.

  {\bf 3}.
  By {\bf 1},
  $\P$ is the category of the projectives of $\F$, and $\F$ has
  enough projectives.
  On the other hand, $\Hom_B(?,B)$ is a dualizing functor on the exact category $\F$
  and $\P$ is mapped to itself by it.
  Thus $\P$ is also the
  category of injectives of $\F$, and $\F$ has enough injectives.
\end{proof}

\begin{lemma}\label{filtration.lem}
  Let $F\in\F$.
  Then there is a filtration
  \[
  0=F_0\subset F_1\subset\cdots\subset F_n=F
  \]
  in $\M$ such that for each $i=1,\ldots,n$,
  there exist $\lambda_i\in \mathbb Q$ and $V_i\in kG\md $
  such that $F_i/F_{i-1}\cong B\otimes_k V_i[-\lambda_i]$
  \(so $F_i$ and $F_i/F_{i-1}$ are in $\Cal F$\),
  where $kG\md $ denotes the category of finite dimensional $kG$-modules,
  and each object of $kG \md$ is viewed as an object of $\V$ of degree zero.
\end{lemma}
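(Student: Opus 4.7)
The plan is to build the filtration by induction on the $B$-rank of $F$, peeling off one graded slice at a time starting from the bottom degree. The key observation is that because $B$ is positively graded and $F$ is $B$-finite, the set of degrees in which $F$ is non-zero is bounded below, and the minimal-degree piece is ``untouched'' by the irrelevant ideal, which is what makes Lemma~\ref{c-flat.lem} applicable.

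More precisely, write $F/\fm_B F = \bigoplus_\mu W_\mu[-\mu]$ as a $\mathbb Q$-graded $kG$-module, where only finitely many homogeneous components $W_\mu$ are non-zero. Let $\lambda_1$ be the smallest such $\mu$. Since $B$ is positively graded, $F_{\lambda_1}$ is also the bottom-degree piece of $F$ itself, and $(\fm_B F)_{\lambda_1}=0$; hence $V_1:=F_{\lambda_1}=W_{\lambda_1}$ is a finite-dimensional $kG$-module sitting inside $F$. Let $F_1\subseteq F$ be the $B$-submodule generated by $V_1$; since $V_1$ is $G$-stable, $F_1$ is a $B*G$-submodule. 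There is an obvious $B*G$-linear surjection $h\colon B\otimes_k V_1[-\lambda_1]\twoheadrightarrow F_1\hookrightarrow F$. Applying $B/\fm_B\otimes_B -$ to $h$ yields the inclusion $V_1[-\lambda_1]\hookrightarrow F/\fm_B F$, which is injective in degree $\lambda_1$ (it is the identity on $V_1$) and trivially zero elsewhere. By Lemma~\ref{c-flat.lem}, $h$ is injective with $B$-free cokernel, so $F_1\cong B\otimes_k V_1[-\lambda_1]$ and $F/F_1\in\F$.

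Now I would induct on $\rank_B F$. The base case $F=0$ is trivial, and for the step we observe that $\rank_B(F/F_1)=\rank_B F-\dim_k V_1<\rank_B F$. By induction, $F/F_1$ admits a filtration $0=\bar F_1\subset\bar F_2\subset\cdots\subset\bar F_n=F/F_1$ with $\bar F_i/\bar F_{i-1}\cong B\otimes_k V_i[-\lambda_i]$; pulling back along $F\twoheadrightarrow F/F_1$ and prepending $F_1$ gives the desired filtration of $F$.

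There is no real obstacle; the only point requiring care is making sure that $V_1$ genuinely sits inside $F$ as a $kG$-submodule (so that $F_1$ really is $B*G$-stable and the map $h$ is well defined), which is automatic because $\lambda_1$ is the minimum degree and $\fm_B F$ vanishes there. Once this is seen, the inductive hypothesis applies to $F/F_1$ and produces the filtration.
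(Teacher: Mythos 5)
Your proof is correct and follows essentially the same route as the paper: both arguments induct on $\rank_B F$, split off the bottom-degree piece $V_1=F_{\lambda_1}$ as a $kG$-submodule, and invoke Lemma~\ref{c-flat.lem} to see that the induced map $B\otimes_k V_1[-\lambda_1]\rightarrow F$ is injective with $B$-free cokernel. The only (cosmetic) difference is that you locate $\lambda_1$ via $F/\fm_B F$ rather than via $F$ itself.
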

  
\begin{proof}
  We use induction on $\rank_B F$.
  If $\rank_B F=0$, there is nothing to prove.
  Assume that $\rank F>0$
  and take the smallest $\lambda\in \mathbb Q$ such that $F_\lambda\neq 0$.
  Set $V_1=F_\lambda[\lambda]$,
  $\lambda_1=\lambda$, and $F_1=B\otimes_k V_1[-\lambda]$.
  There is a canonical map
  \[
  q:F_1=B\otimes_k V_1[-\lambda]=B\otimes_k F_\lambda\xrightarrow{a} F,
  \]
  where $a(b\otimes f)=bf$.
  Then, by Lemma~\ref{c-flat.lem}, 
  $q$ is injective, and $C\in\Cal F$,
  where $C=\Coker q$.
  Applying the induction hypothesis to $C$, we are done.
\end{proof}

\begin{lemma}\label{kG.lem}
  Let $F\in\Cal F$ and $f\geq 0$.
  Then the following are equivalent.
  \begin{enumerate}
  \item[\bf 1] $F\cong B\otimes_k F_0$ for some $\mathbb Q$-graded $G$-module
    $F_0$ such that $F_0\cong (kG)^f$ as $G$-modules.
  \item[\bf 2] $F\cong (B\otimes_k kG)^f$ as a
    $B*G$-module.
  \item[\bf 3] $F/\fm_B F\cong (kG)^f$ as a $G$-module.
  \end{enumerate}
\end{lemma}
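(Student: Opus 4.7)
The plan is to prove the cyclic chain of implications $\textbf{2}\Rightarrow\textbf{3}\Rightarrow\textbf{1}\Rightarrow\textbf{2}$, with the main work in the middle step, where Lemma~\ref{Frobenius.lem}\,\textbf{2} does almost all of the heavy lifting.

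The implication $\textbf{2}\Rightarrow\textbf{3}$ is immediate: reducing modulo $\fm_B$ gives $F/\fm_B F \cong ((B\otimes_k kG)/\fm_B(B\otimes_k kG))^f \cong (kG)^f$ as $G$-modules. Similarly $\textbf{1}\Rightarrow\textbf{2}$ is straightforward at the (ungraded) $B*G$-module level: forgetting the grading, $F\cong B\otimes_k F_0\cong B\otimes_k (kG)^f \cong (B\otimes_k kG)^f$.

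The main step is $\textbf{3}\Rightarrow\textbf{1}$. Here the idea is to promote the hypothesis from the ungraded world to the graded world so that Lemma~\ref{Frobenius.lem}\,\textbf{2} applies. Write $F/\fm_B F=\bigoplus_{\lambda\in\Bbb Q}(F/\fm_B F)_\lambda$ as a graded $kG$-module. Each graded piece $(F/\fm_B F)_\lambda$ is a $G$-submodule and a direct $G$-summand of the whole; by hypothesis the whole is free, hence projective, so each piece is a projective $kG$-module and therefore a direct summand of some $(kG)^{n_\lambda}$. Consequently $F/\fm_B F\in\add\{kG[\lambda]\mid \lambda\in\Bbb Q\}=\Cal P_0$. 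Then condition \textbf{c} of Lemma~\ref{Frobenius.lem}\,\textbf{2} is satisfied, so the ``moreover'' clause gives $F\cong B\otimes_k F/\fm_B F$ as graded $B*G$-modules. Setting $F_0:=F/\fm_B F$ yields a $\Bbb Q$-graded $G$-module with $F_0\cong (kG)^f$ as $G$-modules (by hypothesis \textbf{3}), giving \textbf{1}.

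The only point that is not entirely mechanical is the verification that the graded pieces of $F/\fm_B F$ are each projective over $kG$, i.e.\ that a direct $G$-module summand of a free $kG$-module is projective; this is a standard fact about group algebras and poses no real obstacle. Everything else is bookkeeping, combined with a single appeal to Lemma~\ref{Frobenius.lem}.
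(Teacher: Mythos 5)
Your proof is correct and follows essentially the same route as the paper: the nontrivial implication is \textbf{3}$\Rightarrow$\textbf{1}, obtained by observing that $F/\fm_B F$ is projective in $\V$ (the paper has already recorded that projectivity in $\V$ is equivalent to projectivity as an ungraded $kG$-module, so $\P_0=\add\{kG[\lambda]\}$) and then invoking Lemma~\ref{Frobenius.lem}, \textbf{2}; the remaining implications are, as you say, trivial.
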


\begin{proof}
  {\bf 1$\Rightarrow$2$\Rightarrow$3} is trivial.
  {\bf 3$\Rightarrow$1} follows from Lemma~\ref{Frobenius.lem}, {\bf 2}.
\end{proof}

\paragraph
We denote the full subcategory of $\Cal F$ with objects the $F\in\Cal F$
satisfying the equivalent conditions in Lemma~\ref{kG.lem} by $\Cal G$.
Note that $\Cal G$ is closed under extensions and shift of degree.

\begin{lemma}\label{V-V'.lem}
  Let $V$ be a $kG$-module.
  Let $V'$ be the $k$-vector space $V$ with the trivial $G$-action.
  Then $kG\otimes V \cong kG\otimes V'$.
  Hence $kG\otimes V$ is a direct sum of copies of $kG$.
\end{lemma}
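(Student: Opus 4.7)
The plan is to exhibit an explicit $kG$-linear isomorphism $\varphi: kG\otimes_k V \to kG\otimes_k V'$ using the standard ``tensor identity'' trick. Define $\varphi$ on the $k$-basis of $kG\otimes_k V$ consisting of elements $x\otimes v$ with $x\in G$ and $v$ from a $k$-basis of $V$ by setting
\[
\varphi(x\otimes v) = x\otimes x^{-1}v,
\]
and extend $k$-linearly. Note that although we only specified $\varphi$ on $x\in G$, the prescription is forced on all of $kG$ by linearity in the first factor, and the second factor receives the action of $x^{-1}$ as a group element.

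Next I would verify that $\varphi$ is $kG$-linear when we take the diagonal action $g(x\otimes v)=gx\otimes gv$ on the source (following the convention of (\ref{modules.par})) and the action $g(x\otimes v)=gx\otimes v$ on the target (trivial on the second factor, since $V'$ has trivial $G$-action). The computation is
\[
\varphi\bigl(g\cdot(x\otimes v)\bigr)
= \varphi(gx\otimes gv)
= gx\otimes (gx)^{-1}gv
= gx\otimes x^{-1}v
= g\cdot \varphi(x\otimes v).
\]
Then I would check that the obvious candidate $\psi(x\otimes v)=x\otimes xv$ is a two-sided $k$-linear inverse to $\varphi$, which is immediate from the definitions. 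Hence $\varphi$ is an isomorphism of $kG$-modules.

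Finally, since the $G$-action on $V'$ is trivial, the module $kG\otimes_k V'$ has the form $\bigoplus_{i} kG\otimes_k (kv_i)$ for any $k$-basis $\{v_i\}$ of $V'$, which is visibly a direct sum of $\dim_k V$ copies of the regular module $kG$. Combined with the isomorphism $\varphi$, this gives both conclusions of the lemma.

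There is no real obstacle here; the only subtle point is to respect the convention for the diagonal action established in (\ref{modules.par}) and to remember that $x^{-1}$ makes sense as an action because $x$ runs over group elements, not over general elements of $kG$. The argument extends verbatim to the graded setting of interest in the next paragraph provided $V$ is $\mathbb Q$-graded, since $\varphi$ and $\psi$ preserve the grading induced on both sides.
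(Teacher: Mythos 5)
Your map $\varphi(x\otimes v)=x\otimes x^{-1}v$ is exactly the isomorphism the paper uses, and your verification of $kG$-linearity and the inverse is correct. This is the same approach as the paper's proof, just written out in more detail.
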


\begin{proof}
  The map $g\otimes v\mapsto g\otimes g^{-1}v$ gives a $kG$-isomorphism
  $kG\otimes V\cong kG\otimes V'$.
\end{proof}

\paragraph
From now on, we assume that $k$ is of characteristic $p$, and is $F$-finite.
We set $\fd:=\log_p[k:k^p]$ and $\delta:=d+\fd$.

\begin{lemma}\label{G-F-closed.lem}
  If $F\in\Cal G$, then ${}^eF\in\Cal G$.
\end{lemma}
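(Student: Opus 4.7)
The plan is to verify condition \textbf{3} of Lemma~\ref{kG.lem} for ${}^eF$, namely that ${}^eF/\fm_B({}^eF)$ is a direct sum of copies of $kG$ as a $G$-module. First I would check that ${}^eF\in\F$: since $B$ is a regular ring (it is a polynomial ring), Kunz's theorem shows ${}^eB$ is $B$-free, and as $F$ is a finite free $B$-module, ${}^eF\cong({}^eB)^{\rank_B F}$ inherits $B$-freeness.

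Next I would translate $\fm_B\cdot{}^eF$ back to the untwisted side. Using the description of the Frobenius operator on $B*G$ from~(\ref{frob.para}), the twisted $B$-action on ${}^eF$ is $b\cdot{}^em={}^e(b^{p^e}m)$, so
\[
\fm_B({}^eF)={}^e\bigl(\fm_B^{[p^e]}F\bigr),\qquad\text{hence}\qquad {}^eF\bigm/\fm_B({}^eF)={}^e\bigl(F/\fm_B^{[p^e]}F\bigr).
\]
Because $G$ acts by $k$-algebra automorphisms, $g(b^{p^e})=(gb)^{p^e}$ and $g(\fm_B)=\fm_B$, so $\fm_B^{[p^e]}$ is a $G$-stable ideal and the quotient above is a genuine $(B/\fm_B^{[p^e]})*G$-module.

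By Lemma~\ref{kG.lem}, $F\cong(B\otimes_k kG)^f$ as a $B*G$-module, hence
\[
F/\fm_B^{[p^e]}F\cong\bigl((B/\fm_B^{[p^e]})\otimes_k kG\bigr)^f
\]
with the diagonal $G$-action. Applying Lemma~\ref{V-V'.lem} to each factor, this $G$-module is a direct sum of copies of $kG$. Finally, since $kG$ is defined over $\Bbb F_p$, Lemma~\ref{F-twist.lem}\,\textbf{2} gives ${}^e(kG)\cong(kG)^{p^{\fd e}}$, so ${}^eF/\fm_B({}^eF)$ is again a direct sum of copies of $kG$. Thus condition~\textbf{3} of Lemma~\ref{kG.lem} is satisfied and ${}^eF\in\G$.

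The argument is essentially a bookkeeping exercise; the only subtle step is the translation $\fm_B({}^eF)={}^e(\fm_B^{[p^e]}F)$ through the Frobenius twist, after which the two preceding lemmas do all the work.
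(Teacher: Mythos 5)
Your argument is correct and follows essentially the same route as the paper: the paper likewise computes ${}^eF/\fm_B\,{}^eF\cong{}^e\bigl(B/\fm_B^{[p^e]}\otimes_k F_0\bigr)$, applies Lemma~\ref{V-V'.lem} to identify this with a power of $kG$ before twisting, then Lemma~\ref{F-twist.lem} to handle the twist, and concludes via Lemma~\ref{kG.lem}. Your extra remarks (Kunz's theorem for $B$-freeness of ${}^eF$, $G$-stability of $\fm_B^{[p^e]}$) are just more explicit bookkeeping of steps the paper takes for granted.
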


\begin{proof}
  We can write $F=B\otimes_k F_0$ with $F_0\cong (kG)^f$ as a $kG$-module
  for some $f$.
  We have ${}^eF\in\F$ and
  \[
    {}^eF/\fm_B{}^eF
    \cong
        {}^e(B/\fm_B^{[p^e]}\otimes_B(B\otimes_k F_0))
        \cong
  {}^e(B/\fm_B^{[p^e]}\otimes_k F_0).
  \]
  As $F_0\cong (kG)^f$, we have that
  $B/\fm_B^{[p^e]}\otimes_k F_0\cong (kG)^{fp^{de}}$ by
  Lemma~\ref{V-V'.lem}.
  Hence
  ${}^eF/\fm_B{}^eF\cong{}^e((kG)^{fp^{de}})=(kG)^{fp^{\delta e}}$
  by Lemma~\ref{F-twist.lem}.
  By Lemma~\ref{kG.lem}, we have that ${}^eF\in\Cal G$.
\end{proof}

\begin{lemma}\label{e_0.lem}
  There exists some $e_0\geq 1$ such that for each $F\in\Cal F$ of rank $f$,
  there exists some direct summand $F'$ of ${}^{e_0}F$ in $\Cal F$ such that
  $F'\cong (B\otimes_k kG)^{fp^{\fd e_0}}$ as
  $B*G$-modules.
\end{lemma}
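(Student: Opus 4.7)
The plan is to combine the filtration of $\F$ from Lemma~\ref{filtration.lem} with an analysis of $(B\otimes_k kG)$-summands via reduction modulo $\fm_B$. I would first prove a summand version of Lemma~\ref{kG.lem}: for $F\in\F$ and $m\geq 0$, $F$ admits $(B\otimes_k kG)^m$ as a $B*G$-direct summand if and only if $F/\fm_B F$ admits $(kG)^m$ as a $kG$-direct summand. The forward direction is immediate. For the converse, given a split $kG$-inclusion $\bar\iota\colon (kG)^m\hookrightarrow F/\fm_B F$, lift the standard generators of $(kG)^m$ arbitrarily to elements of $F$; via the induction adjunction $\Hom_{B*G}(B\otimes_k kG,F)\cong F$ this produces a $B*G$-map $\iota\colon (B\otimes_k kG)^m\to F$ reducing to $\bar\iota$ modulo $\fm_B$. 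Lemma~\ref{c-flat.lem} then shows that $\iota$ is injective with $B$-free cokernel, yielding a short exact sequence in $\F$; since $(B\otimes_k kG)^m$ is injective in the Frobenius category $\F$ (Lemma~\ref{Frobenius.lem}(3)), $\iota$ splits.

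Next I would produce a uniform $e_0\geq 1$ such that $B/\fm_B^{[p^{e_0}]}$ has at least one free $kG$-summand. Since $G$ acts faithfully, $Q(B)/Q(A)$ is Galois with group $G$, and the normal basis theorem supplies $\alpha\in Q(B)$ with $\{g\alpha:g\in G\}$ a $Q(A)$-basis of $Q(B)$. Clearing denominators produces $\beta\in B$ with $\mathrm{tr}_G(\beta):=\sum_{g\in G}g(\beta)\neq 0$. The graded Krull intersection $\bigcap_e\fm_B^{[p^e]}=0$ then lets us choose $e_0$ with $\mathrm{tr}_G(\beta)\notin\fm_B^{[p^{e_0}]}$. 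The standard fact that the multiplicity of $kG$ as a direct summand of a finite $kG$-module $V$ equals $\dim_k\mathrm{tr}_G(V)$ then yields the desired free $kG$-summand of $B/\fm_B^{[p^{e_0}]}$.

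Finally, for a general $F\in\F$ of rank $f$, apply Lemma~\ref{filtration.lem} to obtain a filtration in $\M$ with quotients $B\otimes_k V_i[-\lambda_i]$, where $V_i\in kG\md$ and $\sum_i\dim_k V_i=f$; applying ${}^{e_0}$ gives a corresponding filtration of ${}^{e_0}F$. The reduction modulo $\fm_B$ of the $i$th quotient is ${}^{e_0}((B/\fm_B^{[p^{e_0}]})\otimes_k V_i)$, which by step~2, Lemma~\ref{V-V'.lem} (giving $kG\otimes_k V_i\cong(kG)^{\dim V_i}$), and Lemma~\ref{F-twist.lem} (multiplying $kG$-summands by $p^{\fd e_0}$) has $(kG)^{\dim V_i\cdot p^{\fd e_0}}$ as a $kG$-summand. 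Step~1 then promotes this to a $(B\otimes_k kG)^{\dim V_i\cdot p^{\fd e_0}}$ summand of each ${}^{e_0}(B\otimes_k V_i[-\lambda_i])$. A routine induction using the Frobenius category structure of $\F$ --- in any short exact sequence $0\to X\to Y\to Z\to 0$ in $\F$, projective-injective summands of $Z$ split off $Y$ by projectivity and those of $X$ do so by injectivity, so their multiplicities in $X$ and $Z$ add in $Y$ --- aggregates these summands through the filtration and produces the required $B*G$-summand of ${}^{e_0}F$ isomorphic to $(B\otimes_k kG)^{fp^{\fd e_0}}$. The main obstacle is step~2, where the faithfulness of the $G$-action must be leveraged; the normal basis theorem supplies the essential input.
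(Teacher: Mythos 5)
Your overall architecture --- reduce to the layers $B\otimes_k V_i$ of the filtration from Lemma~\ref{filtration.lem}, detect free $kG$-summands after reduction modulo $\fm_B$, promote them to $(B\otimes_k kG)$-summands via Lemma~\ref{c-flat.lem} together with the injectivity of the projectives in the Frobenius category, and aggregate along the filtration --- is essentially the paper's, and your step~1 and the final aggregation are sound. The gap is in step~2. The ``standard fact'' you invoke, that the multiplicity of $kG$ as a direct summand of a finite-dimensional $kG$-module $V$ equals $\dim_k\mathrm{tr}_G(V)$ with $\mathrm{tr}_G(V)=(\sum_{g\in G}g)V$, is true for $p$-groups but false for general finite groups, and $G$ here is arbitrary. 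For instance, take $G=\mathbb{Z}/2$, $k=\mathbb{F}_3$ and $V=k$ the trivial module: then $\mathrm{tr}_G(V)=2V=V$ is one-dimensional, yet $V$ has no free summand; the same failure occurs for any non-$p$-group in characteristic $p$ (e.g.\ $V$ the projective cover of the trivial module). In general one only gets $\dim_k\mathrm{tr}_G(V)\geq{}$(free rank of $V$), which is the wrong direction for your purposes. Consequently, knowing that $\mathrm{tr}_G(\beta)\notin\fm_B^{[p^{e_0}]}$ does not yield a free $kG$-summand of $B/\fm_B^{[p^{e_0}]}$, and your base case collapses.

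The repair is close at hand and is what the paper does: the normal basis theorem gives you strictly more than a nonzero trace. If $\{g\alpha\}_{g\in G}$ is a $Q(A)$-basis of $Q(B)$ and $\beta=c\alpha$ with $c\in A\setminus\{0\}$ clearing denominators, then $\{g\beta\}_{g\in G}$ is $k$-linearly independent in $B$, so $g\mapsto g\beta$ is an injective $kG$-map $kG\rightarrow B$; since $kG$ is self-injective, its image is a $kG$-direct summand of $B$, and by the Krull--Schmidt theorem one may take a graded summand $E_0\cong kG$. Choosing $e_0$ with $E_0\cap\fm_B^{[p^{e_0}]}=0$ (possible because $E_0$ lies in bounded degrees), $E_0$ maps isomorphically onto an injective, hence direct, $kG$-summand of $B/\fm_B^{[p^{e_0}]}$, after which the remainder of your argument goes through unchanged.
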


\begin{proof}
  Let $Q(A)$ and $Q(B)$ denote the fields of fractions of $A$ and $B$
  respectively.
  Then $Q(B)$ is a Galois extension of $Q(A)$ with Galois group $G$
  (here we use the assumption $G$ acts faithfully on $B$).
  So $u:Q(B)\otimes_{Q(A)}Q(B)'\rightarrow kG\otimes_k Q(B)'$
  given by $u(x\otimes y)=\sum_{g\in G}g^{-1}\otimes (gx)y$ is an isomorphism 
  of $(G,Q(B)')$-modules,
  where $Q(B)'$ is the field $Q(B)$ with the trivial $G$-action.
  So $Q(B)$ as a $G$-module is a direct sum of copies of $kG$.
  Thus there is at least one injective $kG$-map $kG\rightarrow Q(B)$.
  Multiplying by an appropriate element of $A\setminus \{0\}$, we get
  an injective $G$-linear map $kG\rightarrow B$.
  Its image is in $B_0\oplus B_1\oplus \cdots\oplus B_r$ for some $r\geq 1$,
  and it is a direct summand, since $kG$ is an injective module.
  Then by the Krull--Schmidt theorem, there is a graded $kG$-direct summand $E_0$ of
  $B$ which is isomorphic to $kG$ as a $G$-module.
  The argument so far,
  which we have given for the convenience of the reader, can be found in \cite{Symonds}.

  We can take $e_0$ sufficiently large that $E_0\cap \fm_B^{[p^{e_0}]}=0$
  for degree reasons, so $E_0\rightarrow B/\fm_B^{[p^{e_0}]}$ is injective.
  We claim that this choice of $e_0$ has the required property.

  Let $V$ be any finite-dimensional $kG$-module.
  Then the inclusion $E_0\hookrightarrow B$ induces a split monomorphism
  $\phi: {}^{e_0}(E_0\otimes_k V)\rightarrow {}^{e_0}(B\otimes_k V)$.
  Note that the composite
  \[
    {}^{e_0}(E_0\otimes_k V)\xrightarrow{\phi}
    {}^{e_0}(B\otimes_k V)
    \rightarrow
    B/\fm_B\otimes_B {}^{e_0}(B\otimes_k V)
    \cong
        {}^{e_0}(B/\fm_B^{[p^{e_0}]}\otimes_k V)
  \]
  is injective, since ${}^{e_0}(?\otimes_k V)$ is an exact functor.
  Note that ${}^{e_0}(E_0\otimes_k V)\cong (kG)^{p^{\fd e_0}\dim_k V}$ as $G$-modules.
  By Lemma~\ref{c-flat.lem}, it is easy to see that
  \[
  B\otimes_k {}^{e_0}(E_0\otimes_k V)\rightarrow {}^{e_0}(B\otimes_k V)
  \]
  given by $b\otimes m\mapsto b\phi(m)$ is an injective map of $\Cal F$
  whose cokernel $D_V$ lies in $\Cal F$.
  As $B\otimes_k {}^{e_0}(E_0\otimes_k V)\in\Cal G\subset \P$, we have a
  decomposition
  \[
    {}^{e_0}(B\otimes_k V[\lambda])=B\otimes_k {}^{e_0}(E_0\otimes_k V)[\lambda/p^{e_0}]
    \oplus D_V[\lambda/p^{e_0}].
  \]
  So if $F\cong B\otimes_k V[\lambda]$ for some finite-dimensional $kG$-module $V$ and
  $\lambda\in\Bbb Q$, the lemma holds.

    Now let
    \[
    0\rightarrow E\rightarrow F\rightarrow H\rightarrow 0
    \]
    be a short exact sequence in $\Cal F$ such that the assertion of the
    lemma (for our $e_0$) is satisfied for $E$ and $H$.
    That is, ${}^{e_0}E$ has a direct summand $E'$ such that $E'\cong
    (B\otimes_k kG)^{\oplus p^{\fd e_0}\rank E}$ as a
    $B*G$-module,
    and
    ${}^{e_0}H$ has a direct summand $H'$ such that $H'\cong
    (B\otimes_k kG)^{\oplus p^{\fd e_0}\rank H}$ as a $(G,B)$-module.
    As $H'$ is a projective object of $\Cal F$, the inclusion $H'\hookrightarrow
    H$ lifts to $H'\hookrightarrow F$.
    So we have a commutative diagram of
    $B*G$-modules, with exact rows and columns
    \[
    \xymatrix{
      & 0 \ar[d] & 0 \ar[d] & 0 \ar[d] & \\
      0 \ar[r] & E' \ar[r] \ar[d] & E'\oplus H'\ar[r] \ar[d] &
      H' \ar[d] \ar[r] & 0 \\
      0 \ar[r] & E \ar[r] \ar[d] & F \ar[r]\ar[d] & H \ar[r]\ar[d] & 0 \\
      0 \ar[r] & E'' \ar[r] \ar[d] & F'' \ar[r]\ar[d] & H'' \ar[r]\ar[d] & 0 \\
      & 0 & 0 & 0
    }.
    \]
    As $E'$ and $H'$ are direct summands of $E$ and $H$, respectively,
    we have that $E''\in\Cal F$ and $H''\in\Cal F$.
    So $F''\in\Cal F$, and hence $E'\oplus H'$ is a direct summand of
    $F$ by Lemma~\ref{Frobenius.lem}.
    As $E'\oplus H'\cong (B\otimes_k kG)^{\oplus(p^{\fd e_0}(\rank_B E+\rank_B H))}$
    and $\rank_B E+\rank_B H=\rank_B F$, we conclude that
    the assertion of the lemma is also true for $F$.

    Now by Lemma~\ref{filtration.lem}, we are done.
\end{proof}

\begin{proposition}\label{crutial.prop}
  There exists some $c>0$ and $0\leq \alpha<1$ such that for any
  $F\in\Cal F$ of rank $f$ and any $e\geq 0$, there exists some decomposition
  \begin{equation}\label{crutial.eq}
    {}^e F\cong F_{0,e}\oplus F_{1,e}
  \end{equation}
    such that $F_{1,e}\in\Cal G$ and $\rank_B F_{0,e}\leq c\alpha^e fp^{\delta e}$.
\end{proposition}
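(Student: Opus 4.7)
The plan is to iterate Lemma~\ref{e_0.lem} geometrically: at each step I peel off a new $\Cal G$-summand from the leftover $\Cal F$-piece, while using Lemma~\ref{G-F-closed.lem} to propagate previously extracted $\Cal G$-parts through further Frobenius pushforwards.

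First I would enlarge the $e_0$ from Lemma~\ref{e_0.lem} (if necessary) so that $p^{de_0}>\abs{G}$; this is possible whenever $d=\dim B\ge 1$, and the case $d=0$ forces $G$ to be trivial and $\Cal F=\Cal G$, making the conclusion vacuous. Set $\beta:=1-\abs{G}/p^{de_0}\in(0,1)$. For an $F\in\Cal F$ of rank $f$, Lemma~\ref{e_0.lem} yields a decomposition ${}^{e_0}F\cong F'_1\oplus F_1$ where $F'_1\cong(B\otimes_k kG)^{fp^{\fd e_0}}\in\Cal G$, so $F_1\in\Cal F$ has rank $fp^{\delta e_0}-\abs{G}fp^{\fd e_0}=fp^{\delta e_0}\beta$.

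The inductive step reads: if ${}^{ne_0}F\cong H_n\oplus F_n$ with $H_n\in\Cal G$ and $\rank_B F_n\le fp^{n\delta e_0}\beta^n$, then applying ${}^{e_0}$ to this decomposition and then Lemma~\ref{e_0.lem} to $F_n$, I obtain ${}^{(n+1)e_0}F\cong({}^{e_0}H_n\oplus G_{n+1})\oplus F_{n+1}$, where ${}^{e_0}H_n\in\Cal G$ by Lemma~\ref{G-F-closed.lem}, the new summand $G_{n+1}\in\Cal G$ comes from Lemma~\ref{e_0.lem}, and $F_{n+1}$ has rank at most $\rank_B F_n\cdot p^{\delta e_0}\beta\le fp^{(n+1)\delta e_0}\beta^{n+1}$. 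Since $\Cal G$ is closed under direct sums (it is closed under extensions, as noted after Lemma~\ref{kG.lem}), the accumulated summand $H_{n+1}:={}^{e_0}H_n\oplus G_{n+1}$ lies in $\Cal G$, so the induction propagates.

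For an arbitrary $e$ I would write $e=ne_0+r$ with $0\le r<e_0$ and apply ${}^r$ to the decomposition at stage $n$, obtaining ${}^eF\cong{}^rH_n\oplus{}^rF_n$, with ${}^rH_n\in\Cal G$ by another use of Lemma~\ref{G-F-closed.lem}. Then $\rank_B{}^rF_n=p^{\delta r}\rank_B F_n\le fp^{\delta e}\beta^n$. Setting $\alpha:=\beta^{1/e_0}\in(0,1)$, I have $\beta^n=\alpha^{ne_0}=\alpha^{e-r}\le\alpha^e\alpha^{-(e_0-1)}$, so the choice $c:=\max(1,\alpha^{-(e_0-1)})$ yields the bound $\rank_B F_{0,e}\le cfp^{\delta e}\alpha^e$ with $F_{0,e}:={}^rF_n$ and $F_{1,e}:={}^rH_n$. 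The main technical obstacle is ensuring that the $\Cal G$-parts accumulated across iterations stay in $\Cal G$; this is handled by the closure of $\Cal G$ under direct sums together with Lemma~\ref{G-F-closed.lem}, and the rest amounts to a geometric bookkeeping step that translates the discrete rate $\beta^n$ into the continuous rate $\alpha^e$.
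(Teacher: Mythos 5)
Your proposal is correct and follows essentially the same route as the paper: iterate Lemma~\ref{e_0.lem} in steps of $e_0$, use Lemma~\ref{G-F-closed.lem} to push the accumulated $\Cal G$-summands forward, and convert the per-$e_0$ decay rate $\beta=1-\abs{G}p^{-de_0}$ into the per-step rate $\alpha=\beta^{1/e_0}$. The only (immaterial) difference is bookkeeping — the paper inducts directly on $e$ and absorbs the base cases $0\le e<e_0$ into the constant $c$, whereas you induct on $n=\lfloor e/e_0\rfloor$ and handle the remainder $r$ by applying ${}^r$ at the end.
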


\begin{proof}
  If the dimension $d=0$, then $A=B=k$ and $G$ is trivial, and this case is obvious,
  since we may set $c=1$, $\alpha=0$, $F_{0,e}=0$ and $F_{1,e}={}^eF$ for each $e$.

  So we may assume that $d\geq 1$.
  Take $e_0$ as in Lemma~\ref{e_0.lem}, and set $\alpha:=(1-\lvert G\rvert\cdot 
  p^{-de_0})^{1/e_0}$
  so that $0 \leq \alpha<1$.
  Set $c=\alpha^{-e_0}>0$.

  We prove the existence of a decomposition by induction on $e\geq 0$.

  If $0\leq e<e_0$, then we set $F_{0,e}={}^eF$ and $F_{1,e}=0$.
  As we have $\rank_B F_{0,e}=fp^{\delta e}$ and
  $c\alpha^e=\alpha^{e-e_0}>1$, we are done.

  Now assume that $e\geq e_0$.
  By the induction hypothesis, we have a decomposition
  \[
    {}^{e-e_0}F\cong F_{0,e-e_0}\oplus F_{1,e-e_0}
    \]
    such that $F_{1,e-e_0}\in\Cal G$ and
    $\rank_B F_{0,e-e_0}\leq c\alpha^{e-e_0}fp^{\delta (e-e_0)}$.
    Then 
    \[
      {}^eF\cong {}^{e_0}F_{0,e-e_0}\oplus {}^{e_0}F_{1,e-e_0}.
      \]
      By Lemma~\ref{G-F-closed.lem}, that ${}^{e_0}F_{1,e-e_0}\in\Cal G$.
      Moreover, 
      \[
      \rank_B{}^{e_0}F_{0,e-e_0}
      =
      p^{\delta e_0}\rank_B F_{0,e-e_0}.
      \]
      By the choice of $e_0$, there is a decomposition
      \[
        {}^{e_0}F_{0,e-e_0}\cong F'\oplus F''
        \]
        such that $F'\in\Cal G$ and
        $\rank_B F'=\lvert G \rvert \cdot p^{\fd e_0}\rank_BF_{0,e-e_0}$.

        Now let $F_{0,e}:=F''$ and $F_{1,e}:={}^{e_0}F_{1,e-e_0}\oplus F'$.
        As ${}^{e_0}F_{1,e-e_0}\in\Cal G$ and $F'\in\Cal G$, we have
        $F_{1,e}\in\Cal G$.
        On the other hand,
\begin{multline*}
        \rank_B F_{0,e}=\rank_B {}^{e_0}F_{0,e-e_0}-\rank_B F'
        =
        (p^{\delta e_0}-|G|\cdot p^{\fd e_0})\rank_BF_{0,e-e_0}\\
        \leq
        \alpha^{e_0}p^{\delta e_0}c\alpha^{e-e_0}fp^{\delta (e-e_0)}
        =c\alpha^efp^{\delta e},
\end{multline*}
        and we are done.
\end{proof}

\begin{theorem}
\label{thm.FLBG}
For any $B*G$-module $F$ that is free of rank $f$ over $B$ we have
\[
\FL(F)=\frac{f}{|G|} [B* G]
\]
in $\Theta^\circ (B * G)$ and the analogous formula 
\[
\FL(\hat{F})=\frac{f}{|G|} [\hat{B}* G]
\]
in $\Theta^\wedge (\hat{B} * G)$.
\end{theorem}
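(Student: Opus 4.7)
The plan is to combine Proposition~\ref{crutial.prop} with the equivalence of norms from Lemma~\ref{lem.plus}. By Proposition~\ref{crutial.prop}, for every $e\geq 0$ there is a decomposition ${}^eF\cong F_{0,e}\oplus F_{1,e}$ in $\M$ with $F_{1,e}\in\Cal G$ and $\rank_B F_{0,e}\leq c\alpha^e fp^{\delta e}$ for some constants $c>0$, $0\leq\alpha<1$ independent of $e$. Writing $r_e$ for the $kG$-rank of $F_{1,e}/\fm_B F_{1,e}$, the characterization of $\Cal G$ in Lemma~\ref{kG.lem} gives $[F_{1,e}]=r_e[B*G]$ in $\Theta^\circ(B*G)$; comparing $B$-ranks yields $|G|\,r_e=fp^{\delta e}-\rank_B F_{0,e}$, hence
\[
\NF_e(F)=\frac{1}{p^{\delta e}}[F_{0,e}]+\frac{fp^{\delta e}-\rank_B F_{0,e}}{|G|\,p^{\delta e}}\,[B*G].
\]

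The second summand obviously tends to $\frac{f}{|G|}[B*G]$ since $\rank_B F_{0,e}/p^{\delta e}\leq c\alpha^e f\to 0$. To see that the first summand tends to $0$ in $\Theta^\circ(B*G)$, I would apply Lemma~\ref{lem.plus} to the finite inclusion $f:B\hookrightarrow B*G$. The classes $\frac{1}{p^{\delta e}}[F_{0,e}]$ lie in $\Theta_+(B*G)$ (they come from actual modules), and because $F_{0,e}$ is a free $B$-module of rank at most $c\alpha^e fp^{\delta e}$ one has
\[
u_B\!\Bigl(f^*\tfrac{1}{p^{\delta e}}[F_{0,e}]\Bigr)=\frac{\ell_B(F_{0,e}/\fm_B F_{0,e})}{p^{\delta e}}=\frac{\rank_B F_{0,e}}{p^{\delta e}}\leq c\alpha^e f\longrightarrow 0.
\]
By Lemma~\ref{lem.plus} this forces $\lVert\frac{1}{p^{\delta e}}[F_{0,e}]\rVert_{B*G}\to 0$, and hence $\FL(F)=\tfrac{f}{|G|}[B*G]$ in $\Theta^\circ(B*G)$.

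For the completed version I would invoke the continuity of the natural map $\Theta^\circ(B*G)\to\Theta^\wedge(\widehat{B*G})$ from Lemma~\ref{cont.lem}(2); since $\widehat{B*G}=\hat B*G$ and completion commutes with Frobenius direct image (so $\widehat{{}^eF}\cong{}^e\hat F$), the image of $\NF_e(F)$ under this map is $\NF_e(\hat F)$, and passing to the limit yields the analogous identity in $\Theta^\wedge(\hat B*G)$.

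The only non-routine step is the convergence $\frac{1}{p^{\delta e}}[F_{0,e}]\to 0$; Proposition~\ref{crutial.prop} controls only the $B$-rank of $F_{0,e}$, not its isomorphism type or its indecomposable decomposition over $B*G$, so one genuinely needs Lemma~\ref{lem.plus} to convert rank control into norm control. Every other step is bookkeeping.
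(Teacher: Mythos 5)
Your argument is correct and follows essentially the same route as the paper: the paper likewise splits $\NF_e(F)$ using the decomposition of Proposition~\ref{crutial.prop}, identifies $[F_{1,e}]$ as a multiple of $[B*G]$ via Lemma~\ref{kG.lem} with the coefficient determined by $B$-ranks, kills the $[F_{0,e}]/p^{\delta e}$ term by combining $B$-freeness ($u_B=\rank_B$) with Lemma~\ref{lem.plus}, and deduces the completed statement from the continuity in Lemma~\ref{cont.lem}. No substantive differences.
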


\begin{proof}
From Proposition~\ref{crutial.prop}, we have 
\begin{equation*} 
\frac{[{}^eF]}{p^{\delta e}} - \frac{f}{|G|} [B * G] = \left( \frac{[F_{1,e}]}{p^{\delta e}} - \frac{f}{|G|}[B* G] \right)+ \frac{[F_{0,e}]}{p^{\delta e}}.  \tag{$\dagger$}\label{eF}
\end{equation*}
Notice that $[F_{0,e}]/p^{\delta e} \in \Theta^\circ_+(B * G)$ and $\lim_{e \to \infty} \rank_B([F_{0,e}]/p^{\delta e}) =0$. But $F_{0,e}$ is free as a $B$-module, so $u_B(F_{0,e})=\rank_B(F_{0,e})$. It follows from Lemma~\ref{lem.plus} that $\lim_{e \to \infty} \norm{[F_{0,e}]/p^{\delta e}} _{B * G} =0$. 

By Lemma~\ref{kG.lem},
the term $[F_{1,e}]/p^{\delta e}$ is of the form $a_e[B * G]$ for some
number $a_e$; taking ranks shows that
$
\lim_{e \to \infty} a_e = f/|G|.
$
Thus
\[
\lim_{e \to \infty} (\frac{[F_{1,e}]}{p^{\delta e}}-\frac{f}{|G|}[B * G]) =0
\]
and the first part of the theorem is proved.

The second part follows from Lemma~\ref{cont.lem}.
\end{proof}

\begin{lemma}\label{easy.lem}
  $B\cong (B\otimes_k kG)^G$ as graded $A$-modules.
  More explicitly, $b\mapsto \sum_g gb\otimes g$ gives a graded $A$-isomorphism.
  The inverse is given by $\sum_g b_g\otimes g\mapsto b_e$.
\end{lemma}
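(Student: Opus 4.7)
The plan is to verify directly that the stated map $\varphi: B \to (B\otimes_k kG)^G$ given by $\varphi(b) = \sum_{g\in G} gb \otimes g$ is a well-defined, bijective, graded $A$-module homomorphism, with the claimed inverse. This is a routine check and I do not expect any real obstacle; the proof is essentially a computation using the definitions of the $(B,G)$-module structure on $B \otimes_k kG$ from (\ref{modules.par}), namely $g(b\otimes h) = gb \otimes gh$, where $G$ acts on $kG$ by left multiplication.

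First I would confirm that $\varphi(b)$ actually lies in the invariants. For any $h \in G$,
\[
h \cdot \varphi(b) = \sum_{g} h(gb) \otimes hg = \sum_g (hg)b \otimes hg = \varphi(b),
\]
by reindexing with $g' = hg$. Next, $\varphi$ is graded because each summand $gb \otimes g$ has the same degree as $b$ (the $G$-action on $B$ preserves degree and $kG$ sits in degree $0$). For $A$-linearity, note that if $a \in A = B^G$ then $ga = a$ for every $g$, so
\[
\varphi(ab) = \sum_g g(ab) \otimes g = \sum_g a(gb) \otimes g = a\, \varphi(b).
\]

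For injectivity, observe that $\{1\otimes g : g \in G\}$ is a $B$-basis of $B\otimes_k kG$, so $\varphi(b) = 0$ forces $gb = 0$ for all $g$ and in particular $b = 0$. For surjectivity, take an invariant element $x = \sum_g b_g \otimes g$. The condition $h \cdot x = x$ rewrites as $\sum_g (hb_g) \otimes hg = \sum_g b_g \otimes g$, and comparing coefficients in the $B$-basis $\{1 \otimes g\}$ yields $b_{hg} = h b_g$ for all $g,h$. Specializing to $g = e$ (the identity) gives $b_h = h b_e$, so
\[
x = \sum_h (h b_e) \otimes h = \varphi(b_e).
\]
This simultaneously proves surjectivity and shows that the inverse map sends $\sum_g b_g \otimes g \mapsto b_e$. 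Combining these steps, $\varphi$ is a graded $A$-isomorphism with the stated inverse, completing the proof.
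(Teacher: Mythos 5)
Your verification is correct and is exactly the computation the paper leaves to the reader (its proof is just ``Easy''), using the same explicit map and inverse given in the statement. No issues.
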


\begin{proof}
Easy.
\end{proof}

\begin{lemma}
\label{fix.lem}
  For any $B * G$-module $M$, $\rank_AM^G=\rank_B M$.
\end{lemma}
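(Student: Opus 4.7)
The plan is to pass to the generic fibre and then invoke Galois descent. Since $B$ is a polynomial ring over $k$ and the $G$-action is faithful, $Q(B)/Q(A)$ is a finite Galois extension with group $G$, and in particular $Q(A)\otimes_A B = Q(B)$ (the left-hand side is a domain contained in $Q(B)$ that is integral over the field $Q(A)$, hence a field containing $B$). Both ranks in the statement are defined in terms of these fraction fields:
\[
\rank_B M = \dim_{Q(B)} Q(B)\otimes_B M, \qquad
\rank_A M^G = \dim_{Q(A)} Q(A)\otimes_A M^G.
\]

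First I would rewrite the left-hand side as a $Q(A)$-calculation. Using $Q(A)\otimes_A B=Q(B)$ one has
\[
Q(B)\otimes_B M \;\cong\; Q(A)\otimes_A M,
\]
so setting $V:=Q(A)\otimes_A M$, the assertion becomes
$\dim_{Q(A)} V^G = \dim_{Q(B)} V$, where $V$ is naturally a $Q(B)*G$-module (the $B$-action extends to $Q(B)$ since $A\setminus\{0\}\subset B\setminus\{0\}$, and the $G$-action on $V$ is $Q(B)$-semilinear in the sense that $g(bv)=(gb)(gv)$).

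Next I would check the compatibility $V^G = Q(A)\otimes_A M^G$. The inclusion $\supseteq$ is obvious. For the reverse inclusion, given $x/s\in V^G$ with $s\in A\setminus\{0\}$, for each $g\in G$ there is some $t_g\in A\setminus\{0\}$ with $t_g(gx-x)=0$ in $M$; taking $t:=\prod_g t_g$, the element $tx\in M$ is $G$-fixed, so $x/s=tx/ts\in Q(A)\otimes_A M^G$. This is the standard argument that taking $G$-invariants commutes with localization at $G$-invariant multiplicative subsets, and it is valid without any hypothesis on $|G|$.

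Finally I would apply Galois descent to the finite-dimensional $Q(B)*G$-module $V$: for the Galois extension $Q(B)/Q(A)$ with group $G$, the natural map $Q(B)\otimes_{Q(A)} V^G\to V$ is an isomorphism of $Q(B)*G$-modules (this is the usual Hilbert~90/descent statement). Comparing $Q(B)$-dimensions gives $\dim_{Q(A)}V^G=\dim_{Q(B)}V$, which combined with the two identifications above yields $\rank_A M^G=\rank_B M$. The only step that requires any care is the localisation-invariants interchange in the modular case, since one cannot use the usual averaging operator $\frac{1}{|G|}\sum_g g$; the argument above avoids this by clearing denominators one $g$ at a time.
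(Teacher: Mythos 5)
Your proof is correct and follows essentially the same route as the paper: both pass to the fraction fields and exploit the Galois structure of $Q(B)/Q(A)$, the paper via the fact that $Q(B)*G$ is a matrix ring over $Q(A)$ (so $Q(B)\otimes_BM\cong Q(B)^m$ as a $Q(B)*G$-module) and you via the equivalent descent isomorphism $Q(B)\otimes_{Q(A)}V^G\cong V$. Your explicit verification that taking $G$-invariants commutes with localization at $A\setminus\{0\}$ fills in a step the paper's chain of isomorphisms leaves implicit.
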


\begin{proof}
  It is well known that $Q(B) * G$ is isomorphic to a matrix ring over $Q(A)$
  (\cite[28.3]{CR}), hence $Q(B)$ is its only indecomposable module.
  Thus $$Q(A) \otimes_A M^G \cong (Q(A) \otimes_A M)^G \cong (Q(B) \otimes_B M)^G
  \cong (Q(B)^m)^G \cong Q(A)^m,$$
where $m = \rank_B M$.
\end{proof}

\begin{theorem}
\label{thm.FLA}
For any
$B*G$-module
$F$ that is free of rank $f$ over $B$ we have
\[
\FL(F^G)=\frac{f}{|G|} [B]
\]
in $\Theta^\circ (A)$ and
\[
\FL(\hat F^G)=\frac{f}{|G|} [\hat B]
\]
$\Theta^\wedge (\hat{A})$, where $A=B^G$.
\end{theorem}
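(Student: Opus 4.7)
My approach is to lift the proof of Theorem~\ref{thm.FLBG} to $A$ by applying the additive functor $(-)^G$ to the decomposition from Proposition~\ref{crutial.prop}, identifying the ``main'' term via Lemma~\ref{easy.lem}, and then controlling the ``error'' term by a Hilbert--Kunz computation. This is enabled by the key compatibility that the $G$-action on ${}^eM$ from (\ref{frob.para}) satisfies $g\cdot{}^em={}^e(gm)$, so that $(-)^G$ commutes with Frobenius direct image: $({}^eF)^G={}^e(F^G)$ as $A$-modules.

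Write ${}^eF\cong F_{0,e}\oplus F_{1,e}$ with $F_{1,e}\in\Cal G$ and $\rank_B F_{0,e}\leq c\alpha^e f p^{\delta e}$, as in Proposition~\ref{crutial.prop}. Additivity of $(-)^G$ gives $({}^eF)^G=F_{0,e}^G\oplus F_{1,e}^G$. By Lemma~\ref{kG.lem} we may write $F_{1,e}\cong(B\otimes_k kG)^{m_e}$, where $m_e$ is the integer determined by $|G|m_e=\rank_B F_{1,e}$, and Lemma~\ref{easy.lem} then yields $F_{1,e}^G\cong B^{m_e}$. Consequently, in $\Theta^\circ(A)$,
\[
\frac{[{}^e(F^G)]}{p^{\delta e}}-\frac{f}{|G|}[B]=\Bigl(\frac{m_e}{p^{\delta e}}-\frac{f}{|G|}\Bigr)[B]+\frac{[F_{0,e}^G]}{p^{\delta e}},
\]
and the first summand tends to zero by the rank count $m_e/p^{\delta e}\to f/|G|$, exactly as in the proof of Theorem~\ref{thm.FLBG}.

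It remains to show $\norm{[F_{0,e}^G]/p^{\delta e}}_A\to 0$. Since $[F_{0,e}^G]\in\Theta_+^\circ(A)$, Lemma~\ref{lem.plus} (with $S'=S=A$) reduces this to $u_A(F_{0,e}^G)/p^{\delta e}\to 0$, and I would prove this via Hilbert--Kunz theory. Set $N:=u_A(B)=\dim_k B/\fm_A B$. The decomposition gives $u_A({}^e(F^G))=u_A(F_{0,e}^G)+m_e N$, and the elementary identity $u_A({}^eM)=p^{\fd e}\ell_A(M/\fm_A^{[p^e]}M)$ (from $\fm_A\cdot{}^eM={}^e(\fm_A^{[p^e]}M)$ and $\dim_k {}^eV=p^{\fd e}\dim_k V$) yields $u_A({}^e(F^G))/p^{\delta e}\to e\HK(\fm_A,F^G)$. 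To evaluate the limit, observe that Kunz's theorem gives $\FL(B)=[B]$ in $\Theta^\circ(B)$ since ${}^eB$ is $B$-free of rank $p^{\delta e}$; restricting along the finite inclusion $A\hookrightarrow B$ via continuity of $f^*$ (Lemma~\ref{cont.lem}) upgrades this to $\FL(B)=[B]$ in $\Theta^\circ(A)$, hence $e\HK(\fm_A,B)=\ell_A(A/\fm_A\otimes_A B)=N$. Since $A$ is a domain with $\rank_A B=|G|$ and $\rank_A F^G=f$ by Lemma~\ref{fix.lem}, the formula in~\ref{HK.par} gives $e\HK(\fm_A,F^G)=fN/|G|$. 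Combined with $m_e N/p^{\delta e}\to fN/|G|$, we conclude $u_A(F_{0,e}^G)/p^{\delta e}\to 0$.

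The main obstacle is the Hilbert--Kunz bootstrap in the previous paragraph: we need to compute $e\HK(\fm_A,A)$ independently of the theorem we are proving, which is achieved by first evaluating $\FL(B)$ over the regular ring $B$ and transporting it to $A$ by continuity of restriction. The completed version in $\Theta^\wedge(\hat A)$ follows immediately from the continuity of the map $\Theta^\circ(A)\to\Theta^\wedge(\hat A)$ of Lemma~\ref{cont.lem}.
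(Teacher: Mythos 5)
Your proposal is correct and follows the paper's proof in all essentials: the same decomposition ${}^eF\cong F_{0,e}\oplus F_{1,e}$ from Proposition~\ref{crutial.prop}, the same application of $(-)^G$ together with Lemma~\ref{easy.lem} to identify $F_{1,e}^G$ with copies of $B$, the same reduction (via positivity in $\Theta_+(A)$) to showing $u_A(F_{0,e}^G)/p^{\delta e}\to 0$, and the same Hilbert--Kunz bookkeeping using $\rank_A F^G=f$ from Lemma~\ref{fix.lem}. The one point where you diverge is the evaluation of $e\HK(\fm_A,A)$: the paper simply cites Watanabe--Yoshida's formula $e\HK(\fm_A,A)=\frac{1}{|G|}\ell_B(B/\fm_A B)$, whereas you rederive it by noting that ${}^eB$ is $B$-free of rank $p^{\delta e}$, so $\FL(B)=[B]$ already in $\Theta^\circ(B)$, and then transporting this along the finite restriction $A\hookrightarrow B$ using the continuity of $f^*$ (Lemma~\ref{cont.lem}) and the rank multiplicativity of $e\HK$ over the domain $A$. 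That substitution is valid (restriction of scalars commutes with ${}^e$ on classes of actual modules, so $\FL(B)=[B]$ holds in $\Theta^\circ(A)$ as well) and makes the argument self-contained at the cost of a slightly longer chain of reductions; otherwise the two proofs coincide.
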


\begin{proof}
From the proof of Theorem~\ref{thm.FLBG} we have $[{}^eF]/p^{\delta e} = a_e[B \otimes_k kG] + [F_{0,e}]/p^{\delta e}$, where $\lim_{e \to \infty} a_e =f/|G|$.
Applying the fixed point functor and using Lemma~\ref{easy.lem} yields 
\[
[{}^eF^G]/p^{\delta e}=a_e[B]+[F^G_{0,e}]/p^{\delta e}.
\]
The theorem will follow once we can show that $\lim_{e\rightarrow \infty}u_A([F^G_{0,e}]/p^{\delta e})=0$, since this takes place in $\Theta_+(A)$.

Applying $u_A$ gives
\[
u_A([{}^eF^G]/p^{\delta e})= u_A(a_e[B])+u_A([F^G_{0,e}]/p^{\delta e}).
\]
Clearly,
\[
\lim_{e\rightarrow \infty}u_A(a_e[B])= (f/|G|)u_A(B)=(f/|G|)\dim_k B/\mathfrak m_A B.
\]
Now we use the Hilbert-Kunz multiplicity (see (\ref{HK.par})).
\[
\lim_{e\rightarrow \infty}u_A \left( \frac{[{}^eF^G]}{p^{\delta e}} \right)=e\HK(\mathfrak m _A,F^G)=\rank_A(F^G) \cdot e\HK(\mathfrak m_A,A).
\]
But $\rank_A(F^G)=\rank_B(F)
=f
$, by Lemma~\ref{fix.lem}.

It was shown by Watanabe and Yoshida \cite[2.7]{WY} that $e\HK(\mathfrak m_A,A)=\frac{1}{|G|} \ell_B(B/\mathfrak m_AB)$, and this right hand side is equal to
$\frac{1}{|G|}\dim_k B/\mathfrak m_A B$.
Combining these, we see that $\lim_{e\rightarrow \infty}u_A([F^G_{0,e}]/p^{\delta e})=0$, as required. 
\end{proof}

\begin{remark}
When $p$ does not divide $|G|$ it is easy to see that the map induced by the fixed point functor $\Theta^\circ(B * G) \rightarrow \Theta^\circ (A)$ is continuous, so Theorem~\ref{thm.FLA} follows immediately from Theorem~\ref{thm.FLBG}.
\end{remark}

\section{Applications}
\label{applications.sec}

We continue to use the notation of (\ref{poly.par}).

\begin{theorem}\label{main.thm}
  Let $k$ be a field of characteristic $p>0$
  such that $[k:k^p]<\infty$,
  and let $V$ be a faithful $G$-module.
  Let $k=V_0,V_1,\ldots,V_n$ be the simple $kG$-modules.
  For each $i$, let
  $P_i\rightarrow V_i$ be the projective cover, and set
  $M_i:=(B\otimes_k P_i)^G$.
  Let $F$ be a $\Bbb Q$-graded 
  $B$-finite $B$-free $B*G$-module.
  Then the $F$-limit of $[F^G]$ exists in $\Theta^\circ(A)$, where $A=B^G$, and
  \[
  \FL([F^G])=\frac{f}{\abs{G}}[ B]
  =\frac{f}{\abs{G}}\sum_{i=0}^n \frac{\dim_k V_i}{\dim_k \End_{kG}(V_i)}[ M_i],
  \]
  where $f=\rank_B F$. An analogous formula holds for $\FL([\hat F^G])$ in $\Theta^\wedge(\hat{A})$.
\end{theorem}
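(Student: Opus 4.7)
The plan is to observe that the first equality $\FL([F^G]) = (f/|G|)[B]$ is exactly the content of Theorem~\ref{thm.FLA}, so the entire burden of Theorem~\ref{main.thm} is the purely algebraic identity
\[
[B] \;=\; \sum_{i=0}^n \frac{\dim_k V_i}{\dim_k \End_{kG}(V_i)}\,[M_i] \quad \text{in } \Theta^\circ(A),
\]
which involves no Frobenius at all. I would reduce this to the decomposition of the regular representation of $kG$, then transport it through the exact functor $(B\otimes_k -)^G$.

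The main step is to decompose $kG$ as a left $kG$-module. Since $kG$ is a finite-dimensional $k$-algebra, the Krull--Schmidt theorem and standard Wedderburn theory give
\[
kG \;\cong\; \bigoplus_{i=0}^n P_i^{m_i}, \qquad m_i = \dim_{D_i} V_i = \frac{\dim_k V_i}{\dim_k D_i},
\]
where $D_i = \End_{kG}(V_i)$. (The semisimple quotient $kG/J(kG) \cong \prod_i \End_{D_i}(V_i)$ decomposes as a left module into $\bigoplus_i V_i^{m_i}$, and lifting via projective covers yields the stated decomposition of $kG$ itself.) This multiplicity formula is the reason the coefficient $\dim_k V_i / \dim_k \End_{kG}(V_i)$ appears in the theorem.

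Next I would tensor this decomposition with $B$ over $k$. Since $B \otimes_k -$ commutes with finite direct sums of $kG$-modules and sends $kG$-modules to $B*G$-modules, we get
\[
B \otimes_k kG \;\cong\; \bigoplus_{i=0}^n (B \otimes_k P_i)^{m_i}
\]
as graded $B*G$-modules. Applying the additive functor $(-)^G$ and invoking Lemma~\ref{easy.lem} to identify $(B\otimes_k kG)^G \cong B$ as graded $A$-modules, together with the definition $M_i = (B \otimes_k P_i)^G$, yields
\[
B \;\cong\; \bigoplus_{i=0}^n M_i^{m_i}
\]
as graded $A$-modules, whence the desired identity in $\Theta^\circ(A)$. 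The completed version follows by applying the continuous map $\Theta^\circ(A) \to \Theta^\wedge(\hat A)$ of Lemma~\ref{cont.lem} (or by repeating the argument with $\hat B$, $\hat A$, noting completion commutes with the finite direct sums and with taking $G$-invariants since $G$ is finite).

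There is no real obstacle here beyond bookkeeping: all the hard analytic work has already been done in Theorem~\ref{thm.FLA}. The only point requiring mild care is the multiplicity formula, where one must remember that over a non-splitting field $k$ the endomorphism ring $\End_{kG}(V_i)$ is a division algebra that may be strictly larger than $k$, so the multiplicity of $P_i$ in $kG$ is $\dim_k V_i / \dim_k \End_{kG}(V_i)$ rather than simply $\dim_k V_i$.
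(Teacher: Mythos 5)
Your proposal is correct and follows essentially the same route as the paper: both reduce the theorem to the decomposition $kG\cong\bigoplus_{i=0}^n P_i^{\oplus u_i}$, transport it through $(B\otimes_k-)^G$ via Lemma~\ref{easy.lem} to get $B\cong\bigoplus_i M_i^{\oplus u_i}$, and quote Theorem~\ref{thm.FLA} for the first equality. The only (immaterial) difference is that you compute the multiplicities $u_i=\dim_k V_i/\dim_k\End_{kG}(V_i)$ from Wedderburn theory for $kG/J(kG)$ and lifting along projective covers, whereas the paper obtains them by applying $\dim_k\Hom_{kG}(-,V_i)$ to the decomposition of $kG$.
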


\begin{proof}
 The first equality is just Theorem~\ref{thm.FLA}. 

  We can write $kG=\bigoplus_{i=0}^n P_i^{\oplus u_i}$ for some $u_i\geq 0$, so $B\cong (B\otimes_k kG)^G\cong \bigoplus_{i=0}^n M_i^{\oplus u_i}$. Applying $\dim_k \Hom_{kG}(-,V_i)$  to the first equality shows that $u_i=\dim_k(V_i)/\dim_k\End_{kG}(V_i)$.
 \end{proof} 

\begin{corollary}\label{main.cor}
  Under the conditions of {\rm Theorem~\ref{main.thm}}, we have
  \[
  \FL([ A])=\frac{1}{\abs{G}}[ B]
  =\frac{1}{\abs{G}}
  \sum_{i=0}^n \frac{\dim_k V_i}{\dim_k \End_{kG}(V_i)}[ M_i]
  \]
  in $\Theta^\circ(A)$
  and similarly after completion.
  \qed
\end{corollary}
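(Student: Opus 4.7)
The plan is to apply Theorem~\ref{main.thm} to the specific $B*G$-module $F = B$, with its standard $B*G$-module structure (left multiplication by $B$ and the given $G$-action). This $B$ is a $\mathbb Q$-graded $B*G$-lattice of rank $f = 1$ over itself, and we have $F^G = B^G = A$. So the first equality
\[
\FL([A]) = \frac{1}{|G|}[B]
\]
in $\Theta^\circ(A)$ is immediate from Theorem~\ref{main.thm}, and likewise after $\fm_A$-adic completion one gets $\FL([\hat A]) = \frac{1}{|G|}[\hat B]$ in $\Theta^\wedge(\hat A)$.

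For the second equality, we need to decompose $[B]$ in $\Theta^\circ(A)$ as an appropriate non-negative combination of the $[M_i]$. The key is Lemma~\ref{easy.lem}, which gives a graded $A$-isomorphism $B \cong (B\otimes_k kG)^G$. Decomposing the regular representation as $kG \cong \bigoplus_{i=0}^n P_i^{\oplus u_i}$ in $kG\md$ and tensoring with $B$, we obtain a decomposition
\[
B \cong (B\otimes_k kG)^G \cong \bigoplus_{i=0}^n \bigl((B\otimes_k P_i)^G\bigr)^{\oplus u_i} = \bigoplus_{i=0}^n M_i^{\oplus u_i}
\]
of graded $A$-modules, so $[B] = \sum_i u_i [M_i]$ in $\Theta^\circ(A)$. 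Finally, the multiplicities $u_i$ are computed by the standard identity $u_i = \dim_k V_i/\dim_k \End_{kG}(V_i)$ from the Wedderburn decomposition of the semisimple quotient $kG/J(kG)$ (equivalently, by applying $\dim_k \Hom_{kG}(-,V_i)$ to $kG \cong \bigoplus P_j^{\oplus u_j}$). The completed version goes through identically, using the $\fm_A$-adic completion of the isomorphism in Lemma~\ref{easy.lem} and the analogous version of Theorem~\ref{main.thm}.

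There is essentially no obstacle here: the corollary is a formal specialization of Theorem~\ref{main.thm} to $F = B$, combined with the decomposition of $kG$ into projective indecomposables that was already used inside the proof of Theorem~\ref{main.thm}. The only mild point worth noting is that we are using the structure of $[B]$ as an $A$-module (where $B$ is typically not free), and the decomposition $B \cong \bigoplus_i M_i^{\oplus u_i}$ holds at the level of graded $A$-module isomorphism classes, which is exactly the level at which $\Theta^\circ(A)$ is built.
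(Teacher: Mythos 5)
Your proposal is correct and matches the paper's intent exactly: the corollary is stated with \qed because it is the specialization of Theorem~\ref{main.thm} to $F=B$, $f=1$, $F^G=A$, and the second equality (via Lemma~\ref{easy.lem}, the decomposition $kG\cong\bigoplus_i P_i^{\oplus u_i}$, and $u_i=\dim_k V_i/\dim_k\End_{kG}(V_i)$) is already established inside the proof of that theorem. Nothing further is needed.
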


\paragraph\label{no-pseudo-reflection.par}
Let the notation be as in Theorem~\ref{main.thm}.
We say that the action of $G$ on $B$ (or on $X:=\Spec B$) is {\em small} if
there is a $G$-stable open subset $U$ of $X$ such that
the action of $G$ on $U$ is free, and the codimension of $X\setminus U$
in $X$ is at least two.

For $g\in G$, let $X_g$ be the locus in $X$ that the action of $g$ and
the identity map agree.
Note that $X_g$ is a closed subscheme of $X$.
If all the generators of $B$ are in degree one, then $X_g$ is nothing but
the eigenspace in $V$ with eigenvalue $1$ of the action of $g$ on $V$, where
$V=B_1$.
We say that $g$ is a pseudo-reflection if the codimension of $X_g$ in $X$ is one.
The action of $G$ on $B$ is small if and only if $G$ does not have a pseudo-reflection.

Now assume further that the action of $G$ on $B$ is small.

\begin{theorem}\label{equivalence.thm}
  Let the notation be as in {\rm(\ref{no-pseudo-reflection.par})}.
  Then $(B\otimes_{A}?):\Ref(A)\rightarrow \Ref(G,B)$ is an
  equivalence with quasi-inverse $(?)^G:\Ref(G,B)\rightarrow \Ref(A)$,
  where $\Ref(A)$ denotes the category of reflexive $A$-modules,
  and $\Ref(G,B)$ denotes the full subcategory of $(G,B)\md$ consisting of
  $(G,B)$-modules which are reflexive as $B$-modules.
  A similar assertion for $\hat A\rightarrow \hat B$ also holds.
\end{theorem}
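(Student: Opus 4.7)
The plan is to reduce the equivalence to Galois descent on the free locus of the $G$-action and then extend using the $S_2$ characterization of reflexive modules over normal Noetherian domains. First I would let $U \subset X = \Spec B$ be the free locus of the $G$-action and $V \subset Y = \Spec A$ its image; by the smallness hypothesis $X \setminus U$ has codimension at least $2$ in $X$, and since $\pi\colon X \to Y$ is finite with $A$ normal, $Y \setminus V$ has codimension at least $2$ in $Y$ as well. The restriction $\pi|_U\colon U \to V$ is then a finite étale $G$-torsor, and classical Galois descent yields an equivalence between quasi-coherent $\mathcal{O}_V$-modules and $G$-equivariant quasi-coherent $\mathcal{O}_U$-modules, realized by the adjoint pair $B|_U \otimes_{A|_V}(-)$ and $(-)^G$. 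In particular, the unit $\eta_M\colon M \to (B \otimes_A M)^G$ and counit $\epsilon_N\colon B \otimes_A N^G \to N$ become isomorphisms after localization at any height-one prime of $A$ or $B$.

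Next I would verify that the two functors land in the reflexive categories on either side. For $N \in \Ref(G,B)$, the module $N$ is $S_2$ over $B$ and hence over $A$ since $B$ is finite over $A$; for each height-one prime $\mathfrak p$ of $A$, Galois descent applied to the étale map $A_\mathfrak p \to B_\mathfrak p$ shows that $(N^G)_\mathfrak p = (N_\mathfrak p)^G$ is free over the DVR $A_\mathfrak p$, and the identification $N^G = \bigcap_{\mathrm{ht}\,\mathfrak p = 1}(N^G)_\mathfrak p$ inside the generic fibre then gives reflexivity of $N^G$ over $A$. For $M \in \Ref(A)$, I would argue that $B \otimes_A M$ is $B$-reflexive by combining étale base change on $U$ with an $S_2$-extension across the codimension-two complement of $U$, exploiting that $B$ is Cohen--Macaulay and finite over $A$.

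With both functors known to preserve reflexivity, $\eta_M$ and $\epsilon_N$ are morphisms between reflexive modules that are isomorphisms at every height-one prime. Their kernels are torsion-free and supported in codimension at least $2$, hence zero; the $S_2$ property of the targets then forces the cokernels to vanish as well, so both $\eta_M$ and $\epsilon_N$ are isomorphisms, establishing the equivalence. For the completed statement the same strategy carries over: $\hat A$ is again normal, the smallness hypothesis is preserved under $\fm_A$-adic completion, and reflexivity passes through completion, so the argument repeats verbatim for $\hat A \to \hat B$.

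The hardest step is the preservation of $B$-reflexivity under $B \otimes_A (-)$: in the modular case $A$ may fail to be Cohen--Macaulay, so the standard flatness-and-depth argument is not available, and one must instead glue étale base change on the free locus to a codimension-two reflexive extension on all of $X$, using that $B$ is a polynomial ring. Once this technical point is settled, the remainder of the argument is purely formal.
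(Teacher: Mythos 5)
The paper itself offers no argument here: Theorem~\ref{equivalence.thm} is quoted as a special case of \cite[(14.24)]{Hashimoto12} (see also \cite[(2.4)]{HN}). So the comparison is with the strategy of those references, and your skeleton --- Galois descent over the free locus $U\to V$ (whose complements have codimension at least two), followed by the principle that an injective map of reflexive modules over a normal domain which is an isomorphism at every height-one prime is an isomorphism --- is indeed the standard route. Your treatment of $(-)^G$ is essentially fine: $N$ is $A$-reflexive because $B$ is finite over $A$, and $N^G$ is the kernel of a map from an $A$-reflexive module to a torsion-free one, hence reflexive.

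The genuine gap is exactly at the step you defer. All of your final reductions are formal \emph{only once} you know that $B\otimes_A M$ is $B$-reflexive for every $M\in\Ref(A)$ (you also need its torsion-freeness to kill the kernel of the counit $\epsilon_N$, whose source is $B\otimes_A N^G$, not a module already known to be reflexive). You never prove this; you only say you ``would argue'' it, and this is precisely where the content of the theorem sits in the modular case. Two things can fail a priori: since $A$ need not be a direct summand of $B$ when $p\mid\abs{G}$, $B$ is not flat over $A$, so $B\otimes_A M$ can acquire torsion via $\Tor_1^A$-obstructions; and even modulo torsion, the natural map $B\otimes_A M\to j_*\bigl((B\otimes_A M)|_U\bigr)\cong(B\otimes_A M)^{**}$ (with $j:U\hookrightarrow X$ the inclusion) need not be surjective without a depth-two estimate for $B\otimes_A M$ along $X\setminus U$. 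The ingredients you invoke --- $B$ Cohen--Macaulay, $B$ a polynomial ring --- give no control over the depth of the non-flat base change $B\otimes_A M$. Nor can you bootstrap: surjectivity of $\epsilon_N$ is equivalent to the missing lemma applied to $M=N^G$, because an injection of a non-reflexive module into a reflexive one can be an isomorphism in codimension one without being onto (e.g.\ a height-two ideal in $B$). Without this lemma your argument only yields the weaker equivalence realized by $M\mapsto(B\otimes_A M)^{**}$; to get the theorem as stated you must import the reflexivity of $B\otimes_A M$ from \cite[(14.24)]{Hashimoto12} or \cite[(2.4)]{HN}, or supply the torsion/depth analysis yourself.
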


\begin{proof}
This is a special case of \cite[(14.24)]{Hashimoto12}.
See also \cite[(2.4)]{HN}.
\end{proof}

Using Theorem~\ref{equivalence.thm}, we can obtain the following equivalences.

\begin{corollary}\label{indec-isom.cor}
  Let the notation be as in {\rm(\ref{no-pseudo-reflection.par})}.
  For $V\in kG\md$, define $M_V:=(B\otimes_k V)^G$.
  \begin{enumerate}
  \item[\bf 1] For $V\in G\md$, the following are equivalent.
    \begin{enumerate}
    \item[\bf a] $V$ is an indecomposable $kG$-module.
    \item[\bf b] $B\otimes_k V$ is an indecomposable object in $(B*G)\md$.
    \item[\bf \^b] $\hat
      B\otimes_k V$ is an indecomposable object in $(\hat B*G)\md$.
    \item[\bf c] $M_V$ is an indecomposable $A$-module.
    \item[{\bf \^c}] $\hat M_V$ is an indecomposable $\hat A$-module.
    \end{enumerate}
  \item[\bf 2] Let $V,V'\in G\md$.
    Then the following are equivalent.
    \begin{enumerate}
    \item[\bf a] $V\cong V'$ in $G\md$.
    \item[\bf b] $B\otimes_k V\cong B\otimes_k V'$ in $(B*G)\md$.
      \item[{\bf \^b}]
        $\hat B\otimes_k V\cong \hat B\otimes_k V'$ in $(\hat B*G)\md$.
      \item[\bf c] $M_V\cong M_{V'}$ as $A$-modules.
      \item[{\bf \^c}] $\hat M_V\cong \hat M_{V'}$.
    \end{enumerate}
  \end{enumerate}
\end{corollary}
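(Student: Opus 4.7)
The plan is to reduce the corollary to two bridges: Theorem~\ref{equivalence.thm}, which converts $A$-module statements into $(G,B)$-module statements, and reduction modulo $\fm_B$, which converts $(G,B)$-module statements about $B\otimes_k V$ into $kG$-module statements about $V$. Passage between rings and their completions is handled separately using the semilocality of endomorphism rings of finitely generated modules over $A$.

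For Part 1, the implication (a)$\Rightarrow$(b) is immediate from functoriality of $B\otimes_k -$. For the converse, given a nontrivial decomposition $B\otimes_k V = M_1\oplus M_2$ in $(B*G)\md$, graded Nakayama gives $M_i/\fm_B M_i\neq 0$ for each $i$; reducing modulo $\fm_B$ then yields a nontrivial $kG$-decomposition $V\cong \bar M_1\oplus\bar M_2$. The same argument with $\hat B$ replacing $B$ gives (a)$\Leftrightarrow$(\^b). For (b)$\Leftrightarrow$(c) and its completed version I invoke Theorem~\ref{equivalence.thm}: $B\otimes_k V$ is $B$-free hence reflexive, $M_V=(B\otimes_k V)^G$ is reflexive as its image under the quasi-inverse $(-)^G$, and any direct summand of a reflexive module is reflexive, so decompositions in $(B*G)\md$ match those in $\Ref(G,B)$ and similarly $A\md$ matches $\Ref(A)$. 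Finally, (c)$\Leftrightarrow$(\^c) is Krull--Schmidt applied to the semilocal endomorphism ring $\End_A(M_V)$, whose $\fm_A$-adic completion is $\End_{\hat A}(\hat M_V)$: a semilocal ring is local if and only if its completion is.

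Part 2 runs parallel. Functoriality gives (a)$\Rightarrow$(b) and (a)$\Rightarrow$(\^b), while reducing modulo $\fm_B$ (resp.\ $\fm_{\hat B}$) recovers (a). The fully faithful equivalence of Theorem~\ref{equivalence.thm} reflects isomorphism, yielding (b)$\Leftrightarrow$(c) and (\^b)$\Leftrightarrow$(\^c). For (c)$\Leftrightarrow$(\^c) the same $\Hom$-completion compatibility does the job. The main substantive point throughout is the descent of a $B*G$-module decomposition of $B\otimes_k V$ to a nontrivial $kG$-module decomposition of $V$; graded Nakayama ensures that nonzero summands remain nonzero modulo $\fm_B$, after which the rest reduces to routine bookkeeping with the established equivalences and the Krull--Schmidt/completion formalism.
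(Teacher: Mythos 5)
Your proposal is correct and follows essentially the same route as the paper: (a)$\Leftrightarrow$(b) via faithfulness of $B\otimes_k-$ in one direction and reduction modulo $\fm_B$ (graded Nakayama) in the other, and (b)$\Leftrightarrow$(c) together with the hatted versions via Theorem~\ref{equivalence.thm}. Two small remarks: your labels in Part 1 are transposed (functoriality of $B\otimes_k-$ proves (b)$\Rightarrow$(a), while the Nakayama argument proves (a)$\Rightarrow$(b), though both arguments are present and correct), and your direct (c)$\Leftrightarrow$(\^{c}) via completion of endomorphism rings is a redundant extra link --- the paper instead chains (\^{c})$\Leftrightarrow$(\^{b})$\Leftrightarrow$(a)$\Leftrightarrow$(c), which avoids having to justify the identification of $\End_{\hat A}(\hat M_V)$ with the completion of $\End_A(M_V)$.
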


\begin{proof}
  We only prove {\bf 1}.

  {\bf b$\Rightarrow $a}.
  This is because $B\otimes_k ?$ is a faithful exact functor from $G\md$
  to $B*G\md$.

  {\bf a$\Rightarrow$b}.
  This is because $B/\fm_B\otimes_B ?$ is an additive functor from the
  category of $B$-finite $B$-free $B*G$-modules to $kG \md$, which sends a nonzero
  object to a nonzero object.

  {\bf a$\Leftrightarrow${\bf \^b}} is similar.
  {\bf b$\Leftrightarrow$\bf c} and
  {{\bf \^b}$\Leftrightarrow${\bf \^c}} are by Theorem~\ref{equivalence.thm}.
\end{proof}

\begin{theorem}
  Let the notation be as in {\rm(\ref{no-pseudo-reflection.par})},
  so in particular the action of $G$ on $B$ is small.
  Then for each $0\leq i,j\leq n$,
  $\FS_{ M_j}( M_i)$ exists, and
  \[
  \FS_{ M_j}( M_i)=\frac{(\dim_k P_i)(\dim_k V_j)}{\abs{G} \dim_k \End_{kG}(V_i)}.
  \]
  A similar formula holds in the complete case.
\end{theorem}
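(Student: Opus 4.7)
The plan is to combine Theorem~\ref{main.thm} with the continuity of the multiplicity functional $\sm_{M_j}$, using the small-action hypothesis only at the step where one identifies the $[M_j]$ as pairwise distinct elements of $\indecomp^\circ(A)$.

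First I would apply Theorem~\ref{main.thm} to the $B*G$-module $F_i := B\otimes_k P_i$, which is $B$-free of rank $f_i=\dim_k P_i$ and whose invariants are $F_i^G=M_i$. This gives
\[
\FL([M_i]) \;=\; \frac{\dim_k P_i}{\abs{G}}[B] \;=\; \frac{\dim_k P_i}{\abs{G}}\sum_{j=0}^{n}\frac{\dim_k V_j}{\dim_k\End_{kG}(V_j)}[M_j]
\]
in $\Theta^\circ(A)$, and the analogous identity holds in $\Theta^\wedge(\hat A)$ after completion.

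Next, I would argue that the displayed expression is the decomposition of $\FL([M_i])$ in the basis $\indecomp^\circ(A)$. This is exactly the point at which the small-action hypothesis is used, via Corollary~\ref{indec-isom.cor}: each $P_j$ is an indecomposable $kG$-module, as the projective cover of a simple module, so each $M_j$ is indecomposable; and $M_j$ cannot be a graded shift of $M_{j'}$ for $j\neq j'$, since such a shift would produce an ungraded isomorphism $M_j\cong M_{j'}$ and hence $P_j\cong P_{j'}$, forcing $j=j'$. Thus $[M_0],\ldots,[M_n]$ are distinct classes in $\indecomp^\circ(A)$ and the coefficient of $[M_j]$ in the displayed expansion really equals $\sm_{M_j}(\FL([M_i]))$.

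Finally, since $\FL([M_i])$ exists and $\sm_{M_j}\colon\Theta^\circ(A)\to\mathbb R$ is continuous, we have
\[
\FS_{M_j}(M_i) \;=\; \lim_{e\to\infty}\sm_{M_j}(\NF_e([M_i])) \;=\; \sm_{M_j}(\FL([M_i])),
\]
and reading off the coefficient of $[M_j]$ yields the stated formula; the complete case is identical, invoking the $\hat A$-versions of Theorem~\ref{main.thm} and Corollary~\ref{indec-isom.cor}. Since the substantive inputs are already contained in those two results, the only remaining point of difficulty is the distinctness of the $[M_j]$ in $\indecomp^\circ(A)$, and that is precisely what the small-action hypothesis delivers through the reflexive-module equivalence of Theorem~\ref{equivalence.thm}.
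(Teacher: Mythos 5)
Your proof is correct and follows essentially the same route as the paper: apply Theorem~\ref{main.thm} to $B\otimes_k P_i$, use Corollary~\ref{indec-isom.cor} (this is where the small-action hypothesis enters) to see that the $M_l$ are pairwise non-isomorphic indecomposables even up to shift, so that $\sm_{M_j}[M_l]=\delta_{jl}$, and conclude by continuity of $\sm_{M_j}$ applied to the existing limit $\FL([M_i])$. One remark: your bookkeeping correctly yields $\dim_k\End_{kG}(V_j)$ in the denominator (matching the summation index, as in Theorem~\ref{main.thm}), whereas the printed statement of the theorem has $\End_{kG}(V_i)$ --- an apparent typo in the paper rather than a flaw in your argument.
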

    
\begin{proof}
  By Theorem~\ref{main.thm}, $\FS_{ M_j}( M_i)$ exists and
  \[
  \FS_{ M_j}( M_i)=\sm_{ M_j}(\FL( M_i))
  =\frac{\rank_{B}(B\otimes_k P_i)}{\abs{G}}\sum_{l=0}^n\frac{\dim_k V_l}{\dim_k \End_{kG}(V_i)}\sm_{ M_j}[ M_l].
  \]
  Because each $P_l$ is indecomposable and $P_l\cong P_j$ if and only if $l=j$, it follows from Corollary~\ref{indec-isom.cor} that
  each $M_l$ is indecomposable and $M_j\cong  M_l$
  (after shift of degree)
  if and only if $l=j$.
  This shows that $\sm_{ M_j}[ M_l]=\delta_{jl}$ (Kronecker's delta).
  The theorem follows.
\end{proof}

\begin{corollary}[{\cite[(3.9)]{HN}}]\label{HN-generalization.cor}
  Let the notation be as in {\rm(\ref{no-pseudo-reflection.par})} and assume that
  $k$ is algebraically closed and that $\abs{G}$ is not divisible by the characteristic of $k$.
  Then, for each $0\leq i,j\leq n$,
  $\FS_{\hat M_j}(\hat M_i)$ exists, and
  \[
  \FS_{\hat M_j}(\hat M_i)=\frac{(\dim_k V_i)(\dim_k V_j)}{\abs{G}}.
  \]
\end{corollary}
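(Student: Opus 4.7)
The plan is to deduce this corollary directly from the complete-case version of the preceding theorem by specializing the two group-theoretic quantities that appear in its formula, using the two extra hypotheses (that $k$ is algebraically closed and that $\abs{G}$ is coprime to the characteristic $p$ of $k$).

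First, I would invoke the complete analogue of the previous theorem, which asserts that
\[
\FS_{\hat M_j}(\hat M_i)=\frac{(\dim_k P_i)(\dim_k V_j)}{\abs{G}\,\dim_k \End_{kG}(V_i)}.
\]
It therefore suffices to show that, under the present hypotheses, $\dim_k P_i=\dim_k V_i$ and $\dim_k \End_{kG}(V_i)=1$.

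For the first identity, since $\abs{G}$ is not divisible by the characteristic of $k$, Maschke's theorem implies that $kG$ is semisimple; hence every simple $kG$-module is already projective, so the projective cover $P_i\to V_i$ is an isomorphism. In particular $\dim_k P_i=\dim_k V_i$. For the second identity, because $k$ is algebraically closed and $V_i$ is a simple $kG$-module, Schur's lemma yields $\End_{kG}(V_i)\cong k$, so $\dim_k\End_{kG}(V_i)=1$.

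Substituting these two simplifications into the formula gives the desired expression
\[
\FS_{\hat M_j}(\hat M_i)=\frac{(\dim_k V_i)(\dim_k V_j)}{\abs{G}}.
\]
There is no real obstacle: the content of the result is already contained in the preceding theorem, and the present corollary is just the specialization to the classical (non-modular, split semisimple) setting, recovering the Hashimoto--Nakajima formula.
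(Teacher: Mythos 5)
Your proposal is correct and is essentially the paper's own proof: the paper simply notes that $P_i\cong V_i$ by Maschke's theorem and leaves implicit the Schur's lemma observation that $\dim_k\End_{kG}(V_i)=1$ when $k$ is algebraically closed, which you spell out explicitly.
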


\begin{proof}
  This is because $P_i\cong V_i$, by Maschke's theorem.
\end{proof}

\begin{corollary}[{\cite[Corollary~2]{Broer}}, {\cite[Corollary~3.3]{Yasuda}}]
  Let the notation be as in {\rm(\ref{no-pseudo-reflection.par})}.
  If $p$ divides $\abs G$, then none of $\hat A$, $A_{\fm_A}$, nor
  $A$ is weakly $F$-regular.
\end{corollary}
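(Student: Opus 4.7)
The plan is to compute the F-signature of $A$ from the paper's main theorem, show that it vanishes exactly when $p \mid |G|$, and then invoke the standard chain of equivalences linking vanishing F-signature to failure of weak F-regularity.

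By Corollary~\ref{main.cor} we have
\[
\FL([A]) = \frac{1}{|G|}\sum_{i=0}^n \frac{\dim_k V_i}{\dim_k \End_{kG}(V_i)}[M_i]
\]
in $\Theta^\circ(A)$, where $M_i = (B\otimes_k P_i)^G$ and $P_i \to V_i$ is the projective cover of the simple $kG$-module $V_i$. Now $A = B^G = (B\otimes_k k)^G = M_{V_0}$, where $V_0 = k$ is the trivial $kG$-module. Since $p \mid |G|$, the trivial module $k$ is not projective, hence is not isomorphic to any indecomposable projective $P_i$. By Corollary~\ref{indec-isom.cor}, the $M_i$ are pairwise non-isomorphic indecomposables (up to shift of degree), and none is isomorphic to $A = M_{V_0}$. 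Hence the coefficient of $[A]$ in the expansion above is zero, so
\[
s(A) := \FS_A(A) = \sm_A(\FL([A])) = 0.
\]
The analogous decomposition in $\Theta^\wedge(\hat A)$ combined with the ``hat'' version of Corollary~\ref{indec-isom.cor} gives $s(\hat A) = 0$, and similarly $s(A_{\fm_A}) = 0$.

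To complete the proof, I would cite the standard facts: Aberbach--Leuschke (with later extensions by Tucker and Polstra) show that for an F-finite Noetherian (local) ring $R$, strong F-regularity is equivalent to $s(R) > 0$; and by Lyubeznik--Smith, weak and strong F-regularity coincide for N-graded F-finite rings (with analogous statements in the F-finite local and complete local settings). Combining these, vanishing F-signature forces failure of weak F-regularity for each of $A$, $A_{\fm_A}$, and $\hat A$.

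The main obstacle is not the paper-internal computation (which reduces immediately to the elementary observation that $k$ is not projective over $kG$ when $p \mid |G|$), but rather invoking the external equivalences of strong and weak F-regularity accurately in each of the graded, local, and complete settings. If one wished to avoid citing Lyubeznik--Smith in the graded case, one could instead work with $\hat A$ throughout, deduce failure of weak F-regularity there from $s(\hat A) = 0$ using only the local equivalence, and then use faithful flatness of completion to descend to $A_{\fm_A}$ and $A$.
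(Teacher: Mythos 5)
Your computation of the $F$-signature is correct and is exactly the paper's first step: $A=M_{V_0}$ with $V_0=k$, and by Corollary~\ref{indec-isom.cor} each $M_i=M_{P_i}$ is indecomposable and $M_i\cong A$ (up to shift) iff $P_i\cong k$, which fails for every $i$ when $p$ divides $\abs{G}$; hence $\sm_{A}(\FL(A))=0$, and likewise for $\hat A$. The problem is the last step. From $s(\hat A)=0$ Aberbach--Leuschke gives you only that $\hat A$ is not \emph{strongly} $F$-regular. To conclude that it is not \emph{weakly} $F$-regular you need the implication ``weakly $F$-regular $\Rightarrow$ strongly $F$-regular,'' and your parenthetical claim that Lyubeznik--Smith's graded theorem has ``analogous statements in the $F$-finite local and complete local settings'' is not a theorem: the equivalence of weak and strong $F$-regularity for general ($F$-finite, excellent, complete) local rings is a well-known open problem. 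Consequently your proposed fallback route --- ``deduce failure of weak $F$-regularity for $\hat A$ using only the local equivalence'' --- does not exist, and your treatment of $\hat A$ and $A_{\fm_A}$ has a genuine gap. (Lyubeznik--Smith does legitimately handle the graded ring $A$ itself, but that alone does not give the statements for $A_{\fm_A}$ and $\hat A$, since failure of weak $F$-regularity for $A$ only says \emph{some} localization fails.)

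The paper avoids the open problem by exploiting the specific situation: $\hat A\hookrightarrow\hat B$ is a module-finite extension with $\hat B$ regular. A weakly $F$-regular ring is a splinter \cite[(5.17)]{HH}, so if $\hat A$ were weakly $F$-regular it would be a direct summand of $\hat B$; a direct summand of an $F$-finite regular local ring is strongly $F$-regular, contradicting $s(\hat A)=0$ via Aberbach--Leuschke. The statements for $A_{\fm_A}$ and $A$ then follow by smooth base change \cite[(7.3)]{HH2} (weak $F$-regularity of $A_{\fm_A}$ would ascend to $\hat A$) and the fact that weak $F$-regularity is checked on localizations at maximal ideals. To repair your argument, replace the appeal to a local weak$=$strong equivalence with this splinter argument, or at minimum restrict your Lyubeznik--Smith citation to the graded ring $A$ and supply a separate argument for $\hat A$ and $A_{\fm_A}$.
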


\begin{proof}
  By Corollary~\ref{indec-isom.cor}, {\bf 1}, $\hat M_j$ is
  indecomposable
  for $j=0,1,\ldots,n$.
  By Corollary~\ref{indec-isom.cor}, {\bf 2}, $\hat M_j=\hat M_{P_j}\cong \hat M_{k}
  =\hat A$ if and only if $P_j\cong k$.
  This happens if and only if $j=0$ and $P_0\rightarrow k$ is an isomorphism.
  This is equivalent to saying that $p$ does not divide $\abs{G}$ and $j=0$.
  By our assumption, $\sm_{\hat A}(\hat M_j)=0$ for $j=0,\ldots,n$.
  So by Theorem~\ref{main.thm},
  \[
  \FS_{\hat A}(\hat A)=\sm_{\hat A}(\FL(\hat A))=\sum_{j=0}^n\frac{\dim_k V_j}{\dim_k \End_{kG}(V_i)}\sm_{\hat A}(\hat
  M_j)=0.
  \]
  Since $\FS_{\hat A}(\hat A)$ is just the $F$-signature of $\hat A$ of
  Huneke--Leuschke \cite{HL},
  we see that $\hat A$ is not strongly $F$-regular, by the theorem of Aberbach and Leuschke \cite{AL}.
  So $\hat A$ cannot be a direct summand subring of the regular local ring
  $\hat B$.
  As a weakly $F$-regular ring is a splinter \cite[(5.17)]{HH},
  $\hat A$ is not weakly $F$-regular.
  By smooth base change \cite[(7.3)]{HH2}, $A_{\fm_A}$ is not weakly $F$-regular.
  It follows that $A$ is not weakly $F$-regular.
\end{proof}

M{\sc itsuyasu} H{\sc ashimoto},  
  Okayama University, Okayama 700--8530, JAPAN
  
  {\small \tt mh@okayama-u.ac.jp}

  \medskip
  
  P{\sc eter} S{\sc ymonds},
  University of Manchester, 
  United Kingdom

  {\small \tt Peter.Symonds@manchester.ac.uk}


\begin{thebibliography}{GHKN}
\def\ji#1#2(#3)#4-#5.{\newblock{\em#1} {\bf#2} (#3), #4--#5.}
\def\GTM#1{Graduate Texts in Math. {\bf #1}, Springer}
\def\SLN#1{Lecture Notes in Math. {\bf #1}, Springer}

\bibitem[AL]{AL}
  I. Aberbach and G. Leuschke,
  The $F$-signature and strong $F$-regularity,
  {\em Math. Res. Lett.} {\bf 10} (2003), 51--56.

\bibitem[Bro]{Broer}
A. Broer,
The direct summand property in modular invariant theory,
{\em Transform. Groups} {\bf 10} (2005), 5--27.

\bibitem[Bru]{Bruns}
  W. Bruns,
  Conic divisor classes over a normal monoid algebra,
  {\em Commutative Algebra and Algebraic Geometry,}
  {\em Contemp. Math.} {\bf 390},
  Amer. Math. Soc. (2005), 63--71.

\bibitem[CR]{CR}
C. W. Curtis and I. Reiner,
{\em Methods of Representation Theory with Applications to finite Groups and Orders, Vol.\ 1},
Wiley, New York (1981).

\bibitem[Fac]{fac}
A. Facchini,
The Krull--Schmidt theorem, in {\em Handbook of Algebra Vol.\ 3}, ed. M. Hazewinkel, North-Holland, Amsterdam (2009),
pp.~357--397.

\bibitem[Fog]{Fogarty}
J. Fogarty,
K\"ahler differentials and Hilbert's fourteenth problem for finite groups,
{\em Amer. J. Math.} {\bf 102} (1980), 1149--1175.

\bibitem[Ful]{Fulton}
  W. Fulton,
  {\em Intersection Theory,} second edition,
  Springer (1998).

\bibitem[Hap]{Happel}
  D. Happel,
  {\em Triangulated categories in the representation theory of
  finite dimensional algebras,}
  {\em  London Math. Soc. Lect. Note Series} {\bf 119}
  Cambridge (1988).

\bibitem[Has]{Hashimoto12}
  M. Hashimoto,
  Equivariant class group. III.
  Almost principal fiber bundles,
  {\tt arXiv:1503.02133v1}
  
\bibitem[Has2]{Hashimotoxx}
  M. Hashimoto,
  $F$-rationality of rings of modular invariants. Preprint 2015.

\bibitem[HasN]{HN}
M. Hashimoto and Y. Nakajima,
Generalized $F$-signature of invariant subrings,
{\em J. Algebra} {\bf 443} (2015), 142--152.

\bibitem[HH]{HH}
M. Hochster and C. Huneke,
Tight closure of parameter ideals and splitting in module-finite extensions,
{\em J. Algebraic Geom.} {\bf 3} (1994), 599--670.

\bibitem[HH2]{HH2}
M. Hochster and C. Huneke,
$F$-regularity, test elements, and smooth base change,
{\em Trans. Amer. Math. Soc.} {\bf 346} (1994), 1--62.

\bibitem[HL]{HL}
  C. Huneke and G. Leuschke,
  Two theorems about maximal Cohen-Macaulay modules,
  {\em Math. Ann.} {\bf 324} (2002), 391--404.  

\bibitem[Kun]{Kunz}
  E. Kunz,
  On Noetherian rings of characteristic $p$,
  {\em Amer. J. Math.} {\bf 98} (1976), 999--1013.

\bibitem[Kur]{Kurano}
  K. Kurano,
  The singular Riemann--Roch theorem and Hilbert--Kunz functions,
  {\em J. Algebra} {\bf 304} (2006), 487--499.

\bibitem[KurO]{KO}
  K. Kurano and K. Ohta,
  On the limit of Frobenius in the Grothendieck group,
  {\em Acta Math. Vietnam.} {\bf 40} (2015), 161--172.

\bibitem[Mon]{Monsky}
  P. Monsky,
  The Hilbert--Kunz function,
  {\em Math. Ann.} {\bf 263} (1983), 43--49.

\bibitem[Nak]{Nakajima}
  Y. Nakajima,
  Dual F-signature of
  Cohen-Macaulay modules over rational double points,
  {\em Algebr. Represent. Theory,}
  Published on line (2015),
  DOI 10.1007/s10468-015-9538-7

\bibitem[Pop]{Popescu}
  N. Popescu,
  {\em Abelian Categories with Applications to Rings and Modules,}
  Academic Press (1973).

\bibitem[San]{Sannai}
A. Sannai, Dual $F$-signature,
{\em Internat. Math. Res. Notices} {\bf 162} (2013), 197--211.
  
\bibitem[Sym]{Symonds}
  P. Symonds,
  Group actions on polynomial and power series rings,
  {\em Pacific J. Math.} {\bf 195} (2000), 225--230.

\bibitem[WY]{WY}
  K.-i. Watanabe and K.-i. Yoshida,
  Hilbert--Kunz multiplicity and an inequality between multiplicity and
  colength,
  {\em J. Algebra} {\bf 230} (2000), 295--317.

\bibitem[WY2]{WY2}
  K.-i. Watanabe and K.-i. Yoshida,
  Minimal relative Hilbert--Kunz multiplicity,
  {\em Illinois J. Math.} {\bf 48} (2004), 273--294.

\bibitem[Yas]{Yasuda}
  Pure subrings of regular local rings, endomorphism rings
  and Frobenius morphisms,
  {\em J. Algebra} {\bf 370} (2012), 15--31.

\end{thebibliography}
\end{document}